\newtheorem{theorem}{Theorem}%[section]
\newtheorem{corollary}[theorem]{Corollary}
\newtheorem{proposition}[theorem]{Proposition}
\newcommand{\rss}{\mathcal{RS}}
\newcommand{\altdesb}{{\rm altdesb\,}}
\newcommand{\altasc}{{\rm altasc\,}}
\newcommand{\mbn}{{\mathcal S}^B_n}
\newcommand{\mb}{{\mathcal S}^B}
\newcommand{\lpk}{{\rm lpk\,}}
\newcommand{\des}{{\rm des\,}}
\newcommand{\altdes}{{\rm altdes\,}}
\newcommand{\msn}{\mathfrak{S}_n}
\newcommand{\ms}{\mathfrak{S}}
\newcommand{\lrf}[1]{\lfloor #1\rfloor}
\newcommand{\Eulerian}[2]{\genfrac{<}{>}{0pt}{}{#1}{#2}}
\newcommand{\arxiv}[1]{\href{http://arxiv.org/abs/#1}{\texttt{arXiv:#1}}}
\title{Alternating Eulerian polynomials and left peak polynomials}
\author[S.-M.~Ma]{Shi-Mei Ma}
\address{School of Mathematics and Statistics,
        Northeastern University at Qinhuangdao,
         Hebei 066004, P.R. China}
\email{shimeimapapers@163.com (S.-M. Ma)}
\author[Q.~Fang]{Qi Fang}
\address{School of Mathematics,
        Northeastern University, Shenyang 110004, P.R.
        China}
\email{qifangpapers@stumail.neu.edu.cn (Q. Fang)}
\author[T.~Mansour]{Toufik Mansour}
\address{Department of Mathematics, University of Haifa, 3498838 Haifa, Israel}
\email{toufik@math.haifa.ac.il (T. Mansour)}
\author[Y.-N. Yeh]{Yeong-Nan Yeh}
\address{Institute of Mathematics,
        Academia Sinica, Taipei, Taiwan}
\email{mayeh@math.sinica.edu.tw (Y.-N. Yeh)}
\subjclass[2010]{Primary 05A05; Secondary 05A15}
\begin{document}
\begin{abstract}
In this paper we present grammatical interpretations of the alternating Eulerian polynomials of types $A$ and $B$. As applications, we derive several properties of
the type $B$ alternating Eulerian polynomials, including combinatorial expansions,
recurrence relations and generating functions.
We establish an interesting connection between
alternating Eulerian polynomials of type $B$ and left peak polynomials of permutations in the symmetric group, which
implies that the type $B$ alternating Eulerian polynomials have gamma-vectors alternate in sign.
\end{abstract}

\keywords{Alternating Eulerian polynomials; Left peak polynomials; Gamma-vectors}

\maketitle
%%%%%%%%%%%%%%%%%%%%%%%%%%%%%%%%%%%%%%%%%%%%%%%%%%%%%%%%%%%%%%%%%%%%%%%%%%%%%%%%%
%%%%%%%%%%%%%%%%%%%%%%%%%%%%%%%%%%%%%%%%%%%%%%%%%%%%%%%%%%%%%%%%%%%%%%%%%%%%%%%%%
%%%%%%%%%%%%%%%%%%%%%%%%%%%%%%%%%%%%%%%%%%%%%%%%%
%%%%%%%%%%%%%%%%%%%%%%%%%%%5%%%%
\section{Introduction}
%%%%%%%%%%%%%%%%%%%%%%%%%%%%%%%%%%%%%%%%%%%%%%%%%%%%%%%%%%%%%%%%%%%%%%%%%%%%%%%%%
%%%%%%%%%%%%%%%%%%%%%%%%%%%%%%%%%%%%%%%%%%%%%%%%%%%%%%%%%%%%%%%%%%%%%%%%%%%%%%%%%
%%%%%%%%%%%%%%%%%%%%%%%%%%%%%%%%%%%%%%%%%%%%%%%%%
%%%%%%%%%%%%%%%%%%%%%%%%%%%5%%%%
Let $[n]=\{1,2,\ldots,n\}$, and let $\pm[n]=[n]\cup\{\overline{1},\ldots,\overline{n}\}$, where $\overline{i}=-i$.
Denote by $\msn$ the {\it symmetric group} of all permutations of $[n]$, and denote by $\mbn$ the {\it hyperoctahedral group} of rank $n$.
Elements of $\mbn$ are signed permutations of $\pm[n]$ with the property that $\sigma(-i)=-\sigma(i)$ for all $i\in [n]$.
Let $\pi=\pi(1)\pi(2)\cdots\pi(n)\in\msn$ and let $\sigma=\sigma(1)\sigma(2)\cdots\sigma(n)\in \mbn$.
In this paper, we always assume that signed permutations are prepended by 0. That is, we identify the signed permutation $\sigma\in\mbn$ with
the word $\sigma(0)\sigma(1)\sigma(2)\cdots\sigma(n)$, where $\sigma(0)=0$.
The numbers of descents of $\pi$ and $\sigma$ are respectively defined by
$\des(\pi)=\#\{i\in[n-1]:~\pi(i)>\pi(i+1)\}$ and
$$\des_B(\sigma)=\#\{i\in\{0,1,\ldots,n-1\}\mid \sigma(i)>\sigma({i+1})\},$$
and their enumerative polynomials are the {\it Eulerian polynomials of types $A$ and $B$}, i.e.,
$$A_n(x)=\sum_{\pi\in\msn}x^{\des(\pi)},~B_n(x)=\sum_{\sigma\in\mbn}x^{\des_B(\sigma)}.$$
%Let $\negg(\sigma)=\#\{i\in[n]: \sigma(i)<0\}$.
%The {\it $q$-Eulerian polynomial of type $B$} is defined by
%$$B_n(x,q)=\sum_{\pi\in \mbn}x^{\des_B(\pi)}q^{\negg(\sigma)}.$$
%Clearly, $B_n(x,0)=A_n(x)$ and $B_n(x,1)=B_n(x)$.
%Following Brenti~\cite[Theorem~3.4]{Brenti94},
%%the polynomials $B_n(x,q)$ satisfy
%%$B_n(x,q)=(1+(1+q)nx-x)B_{n-1}(x,q)+(1+q)(x-x^2)\frac{\mathrm{\partial}}{\mathrm{\partial} x}B_{n-1}(x,q)$,
%%with the initial conditions $B_0(x,q)=1,B_1(x,q)=1+qx$ and $B_2(x,q)=1+(1+4q+q^2)x+q^2x^2$, and
%the exponential generating function of $B_n(x,q)$ is given as follows:
%\begin{equation}\label{BnxEGF}
%\sum_{n=0}^{\infty}B_n(x,q)\frac{z^n}{n!}=\frac{(1-x)\mathrm{e}^{z(1-x)}}{1-x\mathrm{e}^{z(1-x)(1+q)}}.
%\end{equation}
In the past decades, there is a larger literature devoted to combinatorial expansions of
the Eulerian polynomials of types $A$ and $B$, see~\cite{Branden08,Chow08,Petersen15,Zeng16} and references therein.
The purpose of this paper is to show that some of the crucial properties of the Eulerian polynomials have nice analogues for the alternating Eulerian polynomials.

Following Chebikin~\cite{Chebikin08}, an index $i$ is an {\it alternating descent} if
$\pi(2i)<\pi(2i+1)$ or $\pi(2i-1)>\pi(2i)$, where $1\leqslant i\leqslant \lrf{n/2}$. Let $\altdes(\pi)$ be the number of alternating descents of $\pi$.
The {\it alternating Eulerian polynomials} are defined by
$$\widehat{A}_n(x)=\sum_{\pi\in\msn}x^{\altdes(\pi)}.$$
The first few $\widehat{A}_n(x)$ are given as follows:
\begin{align*}
   \widehat{A}_1(x)& =1,~
  \widehat{A}_2(x)=1+x,~
  \widehat{A}_3(x)=2+2x+2x^2, \\
  \widehat{A}_4(x)& =5+7x+7x^2+5x^3,~\widehat{A}_5(x)=16+26x+36x^2+26x^3+16x^4.
\end{align*}
It follows from~\cite[Theorem~4.2]{Chebikin08} that
\begin{equation*}\label{EGF-Anx}
\sum_{n=1}^\infty \widehat{A}_{n}(x)\frac{z^n}{n!}=\frac{\sec(1-x)z+\tan(1-x)z-1}{1-x(\sec(1-x)z+\tan(1-x)z)}.
\end{equation*}
Equivalently, one has
\begin{equation}\label{EGF-Anx}
1+\sum_{n=1}^\infty x\widehat{A}_{n}(x)\frac{z^n}{n!}=\frac{1-x}{1-x(\sec(1-x)z+\tan(1-x)z)}.
\end{equation}
Subsequently, Remmel~\cite{Remmel12} studied the generating function for the joint distribution of alternating descents and alternating major index.
Let $\altdesb(\sigma)$ be the number of indices $i$ of $\sigma\in\mbn$ such that $\sigma(2i)<\sigma(2i+1)$ or $\sigma(2i-1)>\sigma(2i)$, where $1\leqslant i\leqslant \lrf{n/2}$.
Remmel~\cite{Remmel12} studied the statistic $\altdesb(\sigma)$ over $\mbn$.

Gessel and Zhuang~\cite{Gessel14} reproved~\eqref{EGF-Anx} by using a homomorphism of noncommutative symmetric functions
and find the exponential generating function for permutations with all valleys
even and all peaks odd. Ma and Yeh~\cite{Ma16} derived an explicit formula of the alternating Eulerian numbers by using the
derivative polynomials for the tangent function. Let $\widehat{A}_n(x)=\sum_{k=0}^{n-1}\widehat{A}(n,k)x^k$.
Ma and Yeh~\cite{Ma16} found that
the numbers $\widehat{A}(n,k)$ satisfy the recurrence relation
\begin{equation}\label{Ank-recu}
2\widehat{A}(n+1,k)=(k+1)(\widehat{A}(n,k+1)+\widehat{A}(n,k-1))+(n-k+1)(\widehat{A}(n,k)+\widehat{A}(n,k-2)),
\end{equation}
with initial conditions $\widehat{A}(1,0)=1$ and $\widehat{A}(1,k)=0$ for $k\geqslant1$.
Very recently, Lin etal.~\cite{Lin2011} found that the alternating descent polynomials $\widehat{A}_n(x)$ are
unimodal and alternatingly $\gamma$-positive. %This paper is devoted to the alternating Eulerian polynomials of type $B$.

As a variation on the descent statistic $\des_B$, we say that an index $i$ is an {\it alternating descent} (resp.~{\it alternating ascent}) of $\sigma\in \mbn$
if $\sigma(2i)<\sigma(2i+1)$ or $\sigma(2i+1)>\sigma(2i+2)$ (resp. $\sigma(2i)>\sigma(2i+1)$ or $\sigma(2i+1)<\sigma(2i+2)$), where $0\leqslant i\leqslant \lrf{n/2}$ and $\sigma(0)=0$.
Let $\altdes_B(\sigma)$ (resp.~$\altasc_B(\sigma)$) be the number of alternating descents (resp.~alternating ascents) of $\sigma$.
It is clear that $$\altdes_B(\sigma)=\left\{
                                      \begin{array}{ll}
                                        \altdesb(\sigma)+1, & \hbox{if $\sigma(0)<\sigma(1)$;} \\
                                         \altdesb(\sigma), & \hbox{if $\sigma(0)>\sigma(1)$.}
                                      \end{array}
                                    \right.
$$
Then the equations
$$\widehat{B}_n(x)=\sum_{\sigma\in\mbn}x^{\altdes_B(\sigma)}=\sum_{k=0}^n\widehat{B}(n,k)x^k$$
define the type $B$ {\it alternating Eulerian polynomials} and {\it alternating Eulerian numbers}.
Below are the polynomials $\widehat{B}_n(x)$ for $n\leqslant 4$:
\begin{align*}
   \widehat{B}_0(x)& =1,~
  \widehat{B}_1(x)=1+x,~
  \widehat{B}_2(x)=3+2x+3x^2, \\
  \widehat{B}_3(x)&=11+13x+13x^2+11x^3,
    \widehat{B}_4(x)=57+76x+118x^2+76x^3+57x^4.
\end{align*}

The Springer numbers are introduced by Springer~\cite{Springer71} in the study of irreducible root
system of type $B_n$.
Let $s_n$ denote the $n$-th {\it Springer number}. Springer~\cite{Springer71} derived that
\begin{equation*}\label{sn}
\sum_{n=0}^{\infty}s_n\frac{z^n}{n!}=\frac{1}{\cos(z)-\sin(z)}=1+z+3\frac{z^2}{2!}+11\frac{z^3}{3!}+57\frac{z^4}{4!}+\cdots.
\end{equation*}
Arnold~\cite{Arnold} found that $s_n$ is the number of snakes in $\mbn$, where a {\it snake} in $\mbn$ is a signed permutation $\sigma$ such that
$\sigma(2i)<\sigma(2i+1)$ and $\sigma(2i+1)>\sigma(2i+2)$, where $0\leqslant i\leqslant \lrf{(n-1)/2}$. By using Corollary~\ref{unimodal}, we see that
$\widehat{B}(n,0)=\widehat{B}(n,n)=s_n$, which also can be easily deduced by using the definition of $\altdes_B(\sigma)$.

In the next section, we present the main results of this paper.
%%%%%%%%%%%%%%%%%%%%%%%%%%%%%%%%%%%%%%%%%%%%%%%%%%%%%%%%%%%%%%%%%%%%%%%%%%%%%%%%%
%%%%%%%%%%%%%%%%%%%%%%%%%%%%%%%%%%%%%%%%%%%%%%%%%%%%%%%%%%%%%%%%%%%%%%%%%%%%%%%%%%
%%%%%%%%%%%%%%%%%%%%%%%%%%%%%%%%%%%%%%%%%%%%%%%%%%
%%%%%%%%%%%%%%%%%%%%%%%%%%%%5%%%%
\section{Main results}
%%%%%%%%%%%%%%%%%%%%%%%%%%%%%%%%%%%%%%%%%%%%%%%%%%%%%%%%%%%%%%%%%%%%%%%%%%%%%%%%%%
%%%%%%%%%%%%%%%%%%%%%%%%%%%%%%%%%%%%%%%%%%%%%%%%%%%%%%%%%%%%%%%%%%%%%%%%%%%%%%%%%%
%%%%%%%%%%%%%%%%%%%%%%%%%%%%%%%%%%%%%%%%%%%%%%%%%%
%%%%%%%%%%%%%%%%%%%%%%%%%%%%5%%%%
%%%%%%%%%%%%%%%%%%%%%%%%%%%%%%%
%%%%%%%%%%%%%%%%%%%%%%%%%%%%5%%%%
\subsection{Grammatical interpretations}
%%%%%%%%%%%%%%%%%%%%%%%%%%%%%%%%%%%%%%%%%%%%%%%%%%%%%%%%%%%%%%%%%%%%%%%%%%%%%%%%%%
%%%%%%%%%%%%%%%%%%%%%%%%%%%%%%%%%%%%%%%%%%%%%%%%%%%%%%%%%%%%%%%%%%%%%%%%%%%%%%%%%%
%%%%%%%%%%%%%%%%%%%%%%%%%%%%%%%%%%%%%%%%%%%%%%%%%%
%%%%%%%%%%%%%%%%%%%%%%%%%%%%5%%%%
%%%%%%%%%%%%%%%%%%%%%%%%%%%%%%%
%%%%%%%%%%%%%%%%%%%%%%%%%%%%%%%%%%%%%%%%%%%%%%%%%%%%%%%%%%%%%%%%%%%%%%%%%%%%%%%%%%
\hspace*{\parindent}

%Throughout this paper a context-free grammar is in the sense of
For an alphabet $A$, let $\mathbb{Q}[[A]]$ be the rational commutative ring of formal power
series in monomials formed from letters in $A$. Following Chen~\cite{Chen93}, a {\it context-free grammar} over
$A$ is a function $G: A\rightarrow \mathbb{Q}[[A]]$ that replace a letter in $A$ by a formal function over $A$.
The formal derivative $D_G$ is a linear operator defined with respect to a context-free grammar $G$. More precisely,
the derivative $D_G$: $\mathbb{Q}[[A]]\rightarrow \mathbb{Q}[[A]]$ is defined as follows:
for $x\in A$, we have $D_G(x)=G(x)$; for a monomial $u$ in $\mathbb{Q}[[A]]$, $D_G(u)$ is defined so that $D_G$ is a derivation,
and for a general element $q\in\mathbb{Q}[[A]]$, $D_G(q)$ is defined by linearity.
Following~\cite{Chen17}, a {\it grammatical labeling} is an assignment of the underlying elements of a combinatorial structure
with variables, which is consistent with the substitution rules of a grammar.

Recall that the Eulerian numbers of types $A$ and $B$ are respectively defined as follows:
$$\Eulerian{n}{k}=\#\{\pi\in\msn: \des(\pi)=k\},~ B(n,k)=\#\{\pi\in\mbn: \des_B(\pi)=k\}.$$
Let us now recall two results on
context-free grammars.
\begin{proposition}[{\cite[Section~2.1]{Dumont96}}]\label{Dumont96}
If $A=\{x,y\}$ and $G=\{x\rightarrow xy, y\rightarrow xy\}$,
then
\begin{equation*}
D_{G}^n(x)=x\sum_{k=0}^{n-1}\Eulerian{n}{k}x^{k}y^{n-k}\quad\textrm{for $n\geqslant 1$}.
\end{equation*}
\end{proposition}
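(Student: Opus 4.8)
The plan is to prove the identity by induction on $n$, driving the recursion with the classical recurrence for the Eulerian numbers. The base case $n=1$ is immediate: since $D_{G}(x)=xy$ and $\Eulerian{1}{0}=1$, the right-hand side equals $x\,\Eulerian{1}{0}\,y=xy$, as required. For the inductive step, I would write the hypothesis in the expanded form $D_{G}^n(x)=\sum_{k=0}^{n-1}\Eulerian{n}{k}x^{k+1}y^{n-k}$ and apply $D_{G}$ once more term by term.

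The key computation is the action of $D_{G}$ on a single monomial. Because $D_{G}$ is a derivation with $D_{G}(x)=D_{G}(y)=xy$, the Leibniz rule yields
$$D_{G}(x^{a}y^{b})=a\,x^{a-1}(xy)\,y^{b}+b\,x^{a}y^{b-1}(xy)=a\,x^{a}y^{b+1}+b\,x^{a+1}y^{b}.$$
Taking $a=k+1$ and $b=n-k$ and summing over $k$, I would collect the coefficient of $x^{j+1}y^{n+1-j}$ in $D_{G}^{n+1}(x)$. The first summand $a\,x^{a}y^{b+1}$ contributes exactly when $k=j$, giving $(j+1)\Eulerian{n}{j}$, while the second summand $b\,x^{a+1}y^{b}$ contributes exactly when $k=j-1$, giving $(n+1-j)\Eulerian{n}{j-1}$.

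Hence the coefficient of $x^{j+1}y^{n+1-j}$ is $(j+1)\Eulerian{n}{j}+(n+1-j)\Eulerian{n}{j-1}$, which is precisely the standard recurrence $\Eulerian{n+1}{j}=(j+1)\Eulerian{n}{j}+(n+1-j)\Eulerian{n}{j-1}$ for the type $A$ Eulerian numbers; this closes the induction and also confirms that the result is of the claimed shape $x\sum_{j}\Eulerian{n+1}{j}x^{j}y^{n+1-j}$. I expect the only genuine care to lie in the index bookkeeping: the two terms in the derivation of a monomial shift the $x$-degree by different amounts, so one must correctly track which value of $k$ feeds a given power $x^{j+1}y^{n+1-j}$, and verify that the boundary terms at $j=0$ and $j=n$ match the conventions $\Eulerian{n}{-1}=\Eulerian{n}{n}=0$. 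The derivation of the monomial formula and the recognition of the Eulerian recurrence are the heart of the argument; the remainder is routine.
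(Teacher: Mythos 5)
Your proof is correct: the Leibniz computation $D_{G}(x^{a}y^{b})=a\,x^{a}y^{b+1}+b\,x^{a+1}y^{b}$, the index bookkeeping, and the identification of the resulting coefficient with the recurrence $\Eulerian{n+1}{j}=(j+1)\Eulerian{n}{j}+(n+1-j)\Eulerian{n}{j-1}$ are all accurate, and the boundary conventions $\Eulerian{n}{-1}=\Eulerian{n}{n}=0$ are handled properly. Note, however, that the paper does not prove this proposition at all: it is quoted from Dumont with a citation, so there is no internal proof to compare against. That said, your argument is exactly the technique the paper itself uses to prove its Proposition~\ref{lemma01} (the type $A$ alternating analogue), namely induction plus matching coefficients against a known recurrence. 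The alternative route---the one Dumont originally took and the one this paper uses for its main Theorem~\ref{lemma02}---is a grammatical labeling argument: one labels each descent position of a permutation by $x$ and each ascent position by $y$, and observes that applying $D_{G}$ corresponds bijectively to inserting the next largest letter in all possible positions, so that $D_{G}^{n}(x)$ enumerates permutations by descents with no recurrence needed. That combinatorial approach yields a bijective explanation and generalizes to settings where a clean recurrence is not known in advance, whereas your inductive proof is shorter and self-contained given the standard Eulerian recurrence.
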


\begin{proposition}[{\cite[Theorem~10]{Ma131}}]\label{Ma13}
If $A=\{x,y\}$ and $G=\{x\rightarrow xy^2, y\rightarrow x^2y\}$,
then for $n\geqslant 1$,
$$D_{G}^n(x^2)=2^n\sum_{k=0}^{n-1}\Eulerian{n}{k}x^{2n-2k}y^{2k+2},~D_{G}^n(xy)=\sum_{k=0}^nB(n,k)x^{2n-2k+1}y^{2k+1}.$$
\end{proposition}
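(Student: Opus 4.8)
The plan is to prove both identities from a single computation of how $D_G$ acts on an arbitrary monomial, followed by two parallel inductions on $n$. First I would record the basic rule: since $D_G$ is a derivation with $D_G(x)=xy^2$ and $D_G(y)=x^2y$, one gets for all $a,b\geqslant 0$
\begin{equation*}
D_G(x^ay^b)=ax^{a-1}(xy^2)y^b+bx^ay^{b-1}(x^2y)=ax^ay^{b+2}+bx^{a+2}y^b.
\end{equation*}
The key structural observation is that this rule preserves the parities of the two exponents: an even--even monomial maps to a sum of even--even monomials, and an odd--odd monomial maps to a sum of odd--odd monomials. Since $x^2$ is even--even and $xy$ is odd--odd, the two claimed expansions never interact, so each may be established on its own.

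For the first identity I would induct on $n$, the base case $n=1$ being $D_G(x^2)=2x^2y^2=2^1\Eulerian{1}{0}x^2y^2$. Assuming the formula for $n$, I apply $D_G$ termwise by the rule above to $x^{2n-2k}y^{2k+2}$, which yields $(2n-2k)x^{2n-2k}y^{2k+4}+(2k+2)x^{2n-2k+2}y^{2k+2}$. Reindexing the first family by $j=k+1$ and the second by $j=k$, the coefficient of $x^{2(n+1)-2j}y^{2j+2}$ collects to
\begin{equation*}
2^{n+1}\left[(j+1)\Eulerian{n}{j}+(n+1-j)\Eulerian{n}{j-1}\right],
\end{equation*}
which is exactly $2^{n+1}\Eulerian{n+1}{j}$ by the classical recurrence $\Eulerian{n+1}{j}=(j+1)\Eulerian{n}{j}+(n+1-j)\Eulerian{n}{j-1}$ for the type $A$ Eulerian numbers, where the boundary conventions $\Eulerian{n}{-1}=\Eulerian{n}{n}=0$ handle the extreme values of $j$.

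The second identity follows the same template. The base case $n=1$ reads $D_G(xy)=xy^3+x^3y=B(1,1)xy^3+B(1,0)x^3y$. For the inductive step, applying the rule to $x^{2n-2k+1}y^{2k+1}$ gives $(2n-2k+1)x^{2n-2k+1}y^{2k+3}+(2k+1)x^{2n-2k+3}y^{2k+1}$; after the analogous reindexing the coefficient of $x^{2(n+1)-2j+1}y^{2j+1}$ becomes $(2j+1)B(n,j)+(2n-2j+3)B(n,j-1)$, which equals $B(n+1,j)$ by the type $B$ Eulerian recurrence $B(n+1,j)=(2j+1)B(n,j)+(2(n+1)-2j+1)B(n,j-1)$.

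The computations are routine once the derivation rule is in hand; the only real content is matching the two reindexings against the correct Eulerian recurrences. I expect the main care to go into (i) verifying the parity-preservation claim that decouples the two statements, and (ii) keeping the boundary-index conventions straight so that the endpoints $j=0$ and $j=n+1$ are produced correctly from a single summand. No genuinely hard step is anticipated, since the grammar $G$ is evidently designed so that each application of $D_G$ reproduces one step of the respective recurrence.
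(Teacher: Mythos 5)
Your proof is correct: the derivation rule $D_G(x^ay^b)=ax^ay^{b+2}+bx^{a+2}y^b$ is right, both base cases check, and your two reindexings produce precisely the recurrences $\Eulerian{n+1}{j}=(j+1)\Eulerian{n}{j}+(n+1-j)\Eulerian{n}{j-1}$ and $B(n+1,j)=(2j+1)B(n,j)+(2n-2j+3)B(n,j-1)$, which are the standard type $A$ and type $B$ Eulerian recurrences, with the boundary conventions handled correctly. One point of comparison to flag: the paper itself contains no proof of this statement; it is quoted as Proposition~\ref{Ma13} with a citation to \cite[Theorem~10]{Ma131}, so there is no in-paper argument to measure yours against. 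That said, your recurrence-matching induction is exactly the technique the paper deploys for its own Proposition~\ref{lemma01} (apply $D_G$ to the inductive hypothesis, equate coefficients of a fixed monomial, and recognize the known recurrence), so your argument is in the same spirit as the paper's methods and makes the cited result self-contained. The only inessential element is your parity-preservation observation: it is true, but it plays no logical role, since the two identities start from different monomials ($x^2$ and $xy$) and are established by separate, independent inductions.
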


We now give a grammatical interpretation of $\widehat{A}(n,k)$.
\begin{proposition}\label{lemma01}
If $A=\{e,x,y\}$ and $$G_1=\left\{e\rightarrow e(x+y),x\rightarrow \frac{1}{2}(x^2+y^2),y\rightarrow \frac{1}{2}(x^2+y^2)\right\},$$
then
\begin{equation}
D_{G_1}^n(e)=e\sum_{k=0}^n\widehat{A}(n+1,k)x^ky^{n-k}.
\end{equation}
\end{proposition}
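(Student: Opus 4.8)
The plan is to prove the identity by induction on $n$, playing the action of $D_{G_1}$ off against the recurrence \eqref{Ank-recu} already established for the numbers $\widehat{A}(n,k)$. The base case $n=0$ is immediate: $D_{G_1}^0(e)=e$, while the right-hand side is $e\,\widehat{A}(1,0)x^0y^0=e$ since $\widehat{A}(1,0)=1$. (One may equally check $n=1$ against $D_{G_1}(e)=e(x+y)$ and $\widehat{A}_2(x)=1+x$.)

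For the inductive step I would write $D_{G_1}^n(e)=e\,f_n$ with $f_n=\sum_{k=0}^n\widehat{A}(n+1,k)x^ky^{n-k}$ and apply the Leibniz rule,
$$D_{G_1}^{n+1}(e)=D_{G_1}(e)f_n+e\,D_{G_1}(f_n)=e(x+y)f_n+e\,D_{G_1}(f_n).$$
The decisive feature of the grammar is that $x$ and $y$ have the \emph{same} image $\tfrac12(x^2+y^2)$, so for each monomial
$$D_{G_1}(x^ky^{n-k})=\tfrac12(x^2+y^2)\bigl(kx^{k-1}y^{n-k}+(n-k)x^ky^{n-k-1}\bigr),$$
and every resulting term is homogeneous of total degree $n+1$ in $x,y$. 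Hence $D_{G_1}^{n+1}(e)=e\,g_{n+1}$ with $g_{n+1}$ homogeneous of degree $n+1$, and it remains only to identify its coefficients.

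Next I would extract the coefficient of $x^ky^{n+1-k}$ in $g_{n+1}$. The summand $e(x+y)f_n$ contributes $\widehat{A}(n+1,k-1)+\widehat{A}(n+1,k)$, while expanding $\tfrac12(x^2+y^2)(kx^{k-1}y^{n-k}+(n-k)x^ky^{n-k-1})$ produces four monomials, $x^{j+1}y^{n-j}$, $x^{j-1}y^{n-j+2}$, $x^{j+2}y^{n-j-1}$, and $x^jy^{n-j+1}$, which land on $x^ky^{n+1-k}$ for $j=k-1,k+1,k-2,k$ respectively. Summing and collecting like terms, the total coefficient becomes
$$\frac{k+1}{2}\bigl(\widehat{A}(n+1,k-1)+\widehat{A}(n+1,k+1)\bigr)+\frac{n-k+2}{2}\bigl(\widehat{A}(n+1,k)+\widehat{A}(n+1,k-2)\bigr).$$
Finally I would recognize this as exactly $\widehat{A}(n+2,k)$: substituting $n\mapsto n+1$ in \eqref{Ank-recu} and dividing by $2$ yields precisely the displayed expression, which closes the induction. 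The only genuine obstacle is the bookkeeping in the coefficient extraction, namely tracking which of the four monomials from $D_{G_1}(f_n)$ together with the two from $e(x+y)f_n$ contribute to $x^ky^{n+1-k}$; once the exponents are matched, the grouping into the shape of \eqref{Ank-recu} is forced by the symmetry $D_{G_1}(x)=D_{G_1}(y)$.
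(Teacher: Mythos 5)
Your proposal is correct and follows essentially the same route as the paper: induction on $n$, applying $D_{G_1}$ to the inductive hypothesis via the Leibniz rule, extracting the coefficient of $x^ky^{n+1-k}$, and identifying it as $\widehat{A}(n+2,k)$ through the recurrence \eqref{Ank-recu}. The only differences are cosmetic — you start the induction at $n=0$ and spell out the coefficient bookkeeping that the paper leaves implicit.
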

\begin{proof}
Note that $D_{G_1}=e(x+y)$ and $D_{G_1}^2=e(2x^2+2xy+2y^2)$. Thus the result holds for $n=1,2$. Assume that the result holds for $n=m$.
Then
\begin{align*}
&D_{G_1}^{m+1}(e)
%&=D_{G_1}(D_{G_1}^m(e))\\
=D_{G_1}\left(e\sum_{k=0}^m\widehat{A}(m+1,k)x^ky^{m-k}\right)\\
&=e\sum_{k=0}^m\widehat{A}(m+1,k)\left(x^{k+1}y^{m-k}+x^ky^{m-k+1}+\frac{1}{2}(x^2+y^2)(kx^{k-1}y^{m-k}+(m-k)x^ky^{m-k-1})\right).
\end{align*}
Equating the coefficients of $x^ky^{m-k+1}$ in both sides of the above formula, we get
$$2\widehat{A}(m+2,k)=(k+1)(\widehat{A}(m+1,k+1)+\widehat{A}(m+1,k-1))+(m-k+2)(\widehat{A}(m+1,k)+\widehat{A}(m+1,k-2)).$$
Comparing this with~\eqref{Ank-recu}, we immediately get the desired result.
\end{proof}

We can now present the first main result of this paper.
\begin{theorem}\label{lemma02}
If $A=\{e,x,y\}$ and $G_2=\{e\rightarrow e(x+y),x\rightarrow x^2+y^2,y\rightarrow x^2+y^2\}$,
then
\begin{equation}\label{DGe}
D_{G_2}^n(e)=e\sum_{k=0}^n\widehat{B}(n,k)x^ky^{n-k}.
\end{equation}
\end{theorem}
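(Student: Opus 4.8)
The plan is to prove \eqref{DGe} by induction on $n$ using a grammatical labeling of signed permutations together with the insertion of the largest letter. First I would attach to each $\sigma=\sigma(0)\sigma(1)\cdots\sigma(n)\in\mbn$ (with $\sigma(0)=0$) the marker $e$ and a letter on each of the $n$ \emph{steps} $j\in\{0,1,\dots,n-1\}$: step $j$ receives $x$ if it is an alternating descent and $y$ if it is an alternating ascent, so that an even step carries $x$ exactly when $\sigma(j)<\sigma(j+1)$ and an odd step carries $x$ exactly when $\sigma(j)>\sigma(j+1)$. The weight of $\sigma$ is then $e\,x^{\altdes_B(\sigma)}y^{n-\altdes_B(\sigma)}$, and writing $W_n=\sum_{\sigma\in\mbn}x^{\altdes_B(\sigma)}y^{n-\altdes_B(\sigma)}=\sum_k\widehat B(n,k)x^ky^{n-k}$, the claim is precisely $D_{G_2}^n(e)=eW_n$. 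Since $D_{G_2}(eW_n)=e(x+y)W_n+e\,D_{G_2}(W_n)$, where $D_{G_2}$ acts on $W_n$ by replacing each letter by $x^2+y^2$, it suffices to establish the recursion $W_{n+1}=(x+y)W_n+D_{G_2}(W_n)$.

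For the inductive step I would use the standard $2(n+1)$-to-one bijection building $\mathcal{S}^B_{n+1}$ from $\mbn$ by inserting $\pm(n+1)$ into one of the $n+1$ gaps after positions $0,1,\dots,n$, together with a choice of sign; this matches $|\mathcal{S}^B_{n+1}|=2(n+1)\,|\mbn|$. Inserting at the final gap simply appends a new last step, and because $\pm(n+1)$ is extreme the two signs produce an ascent and a descent there, hence a factor $x+y$ independent of the parity of $n$; summing over $\sigma$ this reproduces the $(x+y)W_n$ coming from $e\to e(x+y)$. Inserting into an interior gap splits one step $j$ into two, and since $\pm(n+1)$ is the largest or smallest letter the two new steps are forced to be $xx$ or $yy$ according to the parity of $j$ and the sign, so that summing the two signs always yields $x^2+y^2$ in place of the old letter $\ell_j$. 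This is exactly the local replacement dictated by $x\to x^2+y^2$ and $y\to x^2+y^2$.

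The one genuine difficulty is that an interior insertion shifts every entry beyond the insertion point one place to the right, flipping the parity of each subsequent step and hence flipping each tail label $x\leftrightarrow y$; this spurious global change is absent from the grammar term $e\,D_{G_2}(W_n)$, which alters only the single chosen letter. I would remove it with the sign-reversal involution $\psi_j\colon\mbn\to\mbn$ that negates every entry in positions $>j$ and fixes the rest. Negating two consecutive entries reverses their relative order while leaving their positions, and hence their parities, unchanged, so $\psi_j$ flips precisely the labels $\ell_i$ with $i>j$ and preserves those with $i<j$; as $\psi_j$ is an involution, reindexing the interior sum by $\sigma\mapsto\psi_j\sigma$ turns the flipped tail product $\prod_{i>j}\bar\ell_i$ back into $\prod_{i>j}\ell_i$. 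The interior contribution then becomes $\sum_{\sigma}\sum_{j=0}^{n-1}\big(\prod_{i<j}\ell_i\big)(x^2+y^2)\big(\prod_{i>j}\ell_i\big)=D_{G_2}(W_n)$, and combining with the append case gives $W_{n+1}=(x+y)W_n+D_{G_2}(W_n)$, closing the induction after the trivial base case $n=0$. The crux of the argument is thus the involution $\psi_j$, whose sole role is to absorb the parity shift inherent in the alternating statistic; without it the insertion would not match the grammar term by term.
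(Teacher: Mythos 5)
Your proof is correct, and it shares the paper's skeleton: the same grammatical labeling of the $n$ steps by $x$ and $y$, and induction by inserting the letter of largest absolute value, with final-gap insertions accounting for $e\rightarrow e(x+y)$ and interior insertions for $x\rightarrow x^2+y^2$, $y\rightarrow x^2+y^2$. The genuine difference lies in how the two arguments neutralize the parity shift of the tail, which you correctly single out as the crux. The paper twists the insertion map itself: inserting $n$ or $\overline{n}$ right after $\sigma(j)$ is \emph{defined} to replace the tail by its complement $\mathcal{C}\left(\sigma(j+1)\cdots\sigma(n-1)\right)$; since $\mathcal{C}$ reverses all comparisons in the tail while the shift flips all step parities, the tail labels are preserved, so each substitution-rule term corresponds directly to a labeled signed permutation. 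You instead keep plain insertion, let the tail labels flip, and cancel the flip afterwards by re-indexing the sum over $\mbn$ with the suffix-negation involution $\psi_j$, which reverses tail comparisons without moving positions. The two devices are interchangeable---each is an order-reversing involution applied to the tail---but they buy slightly different things: your version makes bijectivity of the insertion transparent (every element of $\mb_{n+1}$ arises exactly once by deleting $\pm(n+1)$), which is precisely the point the paper leaves to the reader (``it is easy to check that the action of $D_{G_2}$ on elements of $\mb_{n-1}$ generates all elements of $\mbn$''), whereas the paper's twisted insertion realizes the grammar bijectively on labeled objects rather than as a polynomial identity recovered after re-indexing. One small point worth stating explicitly in your write-up: $\psi_j$ does not control the label of step $j$ itself (only one of its two endpoints is negated), but this is harmless because that label never appears in the interior summand, having been replaced by $x^2+y^2$.
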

\begin{proof}
We first introduce a grammatical labeling of $\sigma\in \mbn$ as follows:
\begin{itemize}
  \item [\rm ($L_1$)]If $i$ is an alternating descent, then put a superscript label $x$ right after $\sigma(i)$;
 \item [\rm ($L_2$)]If $i$ is an alternating ascent, then put a superscript label $y$ right after $\sigma(i)$;
\item [\rm ($L_3$)] Put a superscript label $e$ right after $\sigma$.
\end{itemize}
The weight of $\sigma$ is
defined as the product of its labels, i.e., $w(\sigma)=ex^{\altdes_B(\sigma)}y^{\altasc_B(\sigma)}$.
Recall that we always set $\sigma(0)=0$.
Note that $\mb_1=\{0^x1^e,0^y\overline{1}^e\}$ and
$$\mb_2=\{0^x1^y2^e,0^x1^x\overline{2}^e,0^x2^x1^e,0^y\overline{2}^y1^e,0^y\overline{1}^y2^e,0^y\overline{1}~^x\overline{2}^e,0^x2^x\overline{1}^e,0^y\overline{2}~^y\overline{1}\}^e.$$
Hence the result holds for $n=1,2$.
Suppose we get all labeled permutations in $\mb_{n-1}$, where $n\geqslant 3$. Let
$\sigma'$ be obtained from $\sigma\in \mb_{n-1}$ by inserting the entry $n$ or $\overline{n}$.
If we put $n$ or $\overline{n}$ at the end of $\sigma$, we distinguish two cases:
\begin{itemize}
  \item [\rm ($c_1$)]When $n$ is odd, the changes of labeling are illustrated as follows:
  $$\sigma\rightarrow \sigma'=\sigma^xn^e, \sigma\rightarrow \sigma'=\sigma^y\overline{n}^e;$$
 \item [\rm ($c_2$)]If $n$ is even, the changes of labeling are illustrated as follows:
   $$\sigma\rightarrow \sigma'=\sigma^yn^e, \sigma\rightarrow \sigma'=\sigma^x\overline{n}^e;$$
\end{itemize}
In each case, the insertion of $n$ or $\overline{n}$ corresponds to the substitution rule $e\rightarrow e(x+y)$.

Before we present the rest part of the proof,
we need to introduce an involution $\mathcal{C}$ (the complement) on words
with distinct letters from $\mathbb{Z}$. For a word $w=w_1w_2\cdots w_n$  of length $n$ with distinct letters from $\mathbb{Z}$, let
$\mathcal{C}(w)=w'_1w'_2\cdots w'_n$ be the  word such that if $w_i$ is the $l_i$-th largest letter in $w$ then $w'_i$ is the $l_i$-th smallest letter in $w$.
For example, if $w=3\overline{6}71\overline{2}$ then $\mathcal{C}(w)=\overline{2}7\overline{6}13$.
For each signed permutation $\sigma\in\mb_{n-1}$ and $0\leqslant j\leqslant n-2$, we call two permutations in $\mbn$,
 $$
\sigma(0)\sigma(1)\sigma(2)\cdots\sigma(j)n\mathcal{C}\left(\sigma({j+1})\sigma({j+2})\cdots\sigma(n-1)\right),$$
$$\sigma(0)\sigma(1)\sigma(2)\cdots\sigma(j)\overline{n}\mathcal{C}\left(\sigma({j+1})\sigma({j+2})\cdots\sigma(n-1)\right),
 $$
the inserting $n$ and the inserting $\overline{n}$ right after $\sigma(j)$, respectively.

We distinguish two cases to insert $n$ or $\overline{n}$ right after an alternating descent:
\begin{itemize}
  \item [\rm ($c_1$)]If we insert $n$ or $\overline{n}$ right after an even alternating descent, then the changes of labeling are illustrated as follows:
$$\cdots\sigma(2i)^x\sigma({2i+1})\cdots\mapsto \cdots\sigma(2i)^xn^x\mathcal{C}\left(\sigma({2i+1})\cdots\sigma(n-1)\right);$$
$$\cdots\sigma(2i)^x\sigma({2i+1})\cdots\mapsto \cdots\sigma(2i)^y\overline{n}^y\mathcal{C}\left(\sigma({2i+1})\cdots\sigma(n-1)\right).$$
 \item [\rm ($c_2$)]If we insert $n$ or $\overline{n}$ right after an odd alternating descent, then the changes of labeling are illustrated as follows:
$$\cdots\sigma(2i+1)^x\sigma({2i+2})\cdots\mapsto \cdots\sigma(2i+1)^yn^y\mathcal{C}\left(\sigma({2i+2})\cdots\sigma(n-1)\right);$$
$$\cdots\sigma(2i+1)^x\sigma({2i+2})\cdots\mapsto \cdots\sigma(2i+1)^x\overline{n}^x\mathcal{C}\left(\sigma({2i+2})\cdots\sigma(n-1)\right).$$
\end{itemize}
In each case, the insertion of $n$ or $\overline{n}$ corresponds to the substitution rule $x\rightarrow x^2+y^2$.

We distinguish two cases to insert $n$ or $\overline{n}$ right after an alternating ascent:
\begin{itemize}
  \item [\rm ($c_1$)]If we insert $n$ or $\overline{n}$ right after an even alternating ascent, then the changes of labeling are illustrated as follows:
$$\cdots\sigma(2i)^y\sigma({2i+1})\cdots\mapsto \cdots\sigma(2i)^xn^x\mathcal{C}\left(\sigma({2i+1})\cdots\sigma(n-1)\right);$$
$$\cdots\sigma(2i)^y\sigma({2i+1})\cdots\mapsto \cdots\sigma(2i)^y\overline{n}^y\mathcal{C}\left(\sigma({2i+1})\cdots\sigma(n-1)\right).$$
 \item [\rm ($c_2$)]If we insert $n$ or $\overline{n}$ right after an odd alternating ascent, then the changes of labeling are illustrated as follows:
$$\cdots\sigma(2i+1)^y\sigma({2i+2})\cdots\mapsto \cdots\sigma(2i+1)^yn^y\mathcal{C}\left(\sigma({2i+2})\cdots\sigma(n-1)\right);$$
$$\cdots\sigma(2i+1)^y\sigma({2i+2})\cdots\mapsto \cdots\sigma(2i+1)^x\overline{n}^x\mathcal{C}\left(\sigma({2i+2})\cdots\sigma(n-1)\right).$$
\end{itemize}
In each case, the insertion of $n$ or $\overline{n}$ corresponds to the substitution rule $y\rightarrow x^2+y^2$.

Therefore, in each case, the insertion of $n$ or $\overline{n}$ corresponds to one substitution rule in $G_2$.
It is easy to check that the action of $D_{G_2}$ on elements of $\mb_{n-1}$ generates all elements of $\mbn$. This completes the proof.
\end{proof}

It follows from~\eqref{DGe} that
\begin{align*}
&D_{G_2}^{n+1}(e)
%&=D_{G_2}(D_{G_2}^m(e))\\
=D_{G_2}\left(e\sum_{k=0}^n\widehat{B}(n,k)x^ky^{n-k}\right)\\
&=e\sum_{k=0}^n\widehat{B}(n,k)\left(x^{k+1}y^{n-k}+x^ky^{n-k+1}+(x^2+y^2)(kx^{k-1}y^{n-k}+(n-k)x^ky^{n-k-1})\right).
\end{align*}
Equating the coefficients of $x^ky^{n-k+1}$ in both sides, we get the following corollary.
\begin{corollary}
The numbers $\widehat{B}(n,k)$ satisfy the recurrence relation
\begin{equation}\label{Bnkrecu01}
\widehat{B}(n+1,k)=(k+1)\widehat{B}(n,k+1)+k\widehat{B}(n,k-1)+(n-k+1)\widehat{B}(n,k)+(n-k+2)\widehat{B}(n,k-2),
\end{equation}
with the initial conditions $\widehat{B}(1,0)=\widehat{B}(1,1)=1$ and $\widehat{B}(n,k)=0$ for $k>1$.
Equivalently,
\begin{equation}\label{Bnkrecu}
\widehat{B}_{n+1}(x)=(1+n+x+nx^2)\widehat{B}_{n}(x)+(1-x)(1+x^2)\frac{\mathrm{d}}{\mathrm{d}x}\widehat{B}_{n}(x),
\end{equation}
with the initial conditions $\widehat{B}_{0}(x)=1$ and $\widehat{B}_{1}(x)=1+x$.
\end{corollary}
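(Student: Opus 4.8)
The plan is to obtain both recurrences directly from the grammatical identity \eqref{DGe} of Theorem~\ref{lemma02}, exactly along the lines of the computation displayed just before the statement. First I would apply $D_{G_2}$ to $D_{G_2}^n(e)=e\sum_{k=0}^n\widehat{B}(n,k)x^ky^{n-k}$ using the derivation property: since $D_{G_2}(e)=e(x+y)$ and $D_{G_2}(x^ky^{n-k})=(x^2+y^2)\bigl(kx^{k-1}y^{n-k}+(n-k)x^ky^{n-k-1}\bigr)$, each monomial $\widehat{B}(n,k)x^ky^{n-k}$ contributes the bracketed expression shown in the excerpt. Since the left-hand side equals $e\sum_k\widehat{B}(n+1,k)x^ky^{n+1-k}$, the recurrence \eqref{Bnkrecu01} is precisely what is obtained by equating the coefficient of $x^ky^{n-k+1}$ on both sides.

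To carry out that coefficient extraction, I would expand the factor $(x^2+y^2)$ so that the bracket attached to index $j$ becomes a sum of six monomials, namely $x^{j+1}y^{n-j}$, $x^jy^{n-j+1}$, $jx^{j+1}y^{n-j}$, $jx^{j-1}y^{n-j+2}$, $(n-j)x^{j+2}y^{n-j-1}$ and $(n-j)x^jy^{n-j+1}$. Matching each of these to the target monomial $x^ky^{n-k+1}$ forces $j\in\{k-1,k,k+1,k-2\}$ respectively, and summing the resulting coefficients yields the four terms of \eqref{Bnkrecu01}. The only point requiring care is that the coefficient of $\widehat{B}(n,k-1)$ receives contributions from two distinct monomials, combining as $1+(k-1)=k$, and likewise the coefficient of $\widehat{B}(n,k)$ combines as $1+(n-k)=n-k+1$; once these collapses are handled correctly the four-term recurrence appears.

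For the equivalent form \eqref{Bnkrecu}, I would multiply \eqref{Bnkrecu01} by $x^k$ and sum over $k$, recognizing each of the four sums after an index shift as a combination of $\widehat{B}_n(x)$ and $x\tfrac{\mathrm{d}}{\mathrm{d}x}\widehat{B}_n(x)$: the term $(k+1)\widehat{B}(n,k+1)$ gives $\tfrac{\mathrm{d}}{\mathrm{d}x}\widehat{B}_n(x)$, the term $k\widehat{B}(n,k-1)$ gives $x^2\tfrac{\mathrm{d}}{\mathrm{d}x}\widehat{B}_n(x)+x\widehat{B}_n(x)$, the term $(n-k+1)\widehat{B}(n,k)$ gives $(n+1)\widehat{B}_n(x)-x\tfrac{\mathrm{d}}{\mathrm{d}x}\widehat{B}_n(x)$, and the term $(n-k+2)\widehat{B}(n,k-2)$ gives $nx^2\widehat{B}_n(x)-x^3\tfrac{\mathrm{d}}{\mathrm{d}x}\widehat{B}_n(x)$. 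Collecting the coefficients of $\widehat{B}_n(x)$ produces $1+n+x+nx^2$, while collecting the coefficients of $\tfrac{\mathrm{d}}{\mathrm{d}x}\widehat{B}_n(x)$ produces $1+x^2-x-x^3=(1-x)(1+x^2)$, which is exactly \eqref{Bnkrecu}; the initial conditions $\widehat{B}_0(x)=1$ and $\widehat{B}_1(x)=1+x$ are read off from the base cases $D_{G_2}^0(e)=e$ and $D_{G_2}(e)=e(x+y)$. I expect no genuine obstacle in this argument, as it is a mechanical coefficient comparison; the main place where an error could creep in is the bookkeeping of the second paragraph, where two separate monomials collapse into a single coefficient for $\widehat{B}(n,k-1)$ and for $\widehat{B}(n,k)$.
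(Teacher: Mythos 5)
Your proposal is correct and follows essentially the same route as the paper: apply $D_{G_2}$ to the identity $D_{G_2}^n(e)=e\sum_{k}\widehat{B}(n,k)x^ky^{n-k}$ of Theorem~\ref{lemma02}, equate coefficients of $x^ky^{n-k+1}$ to get \eqref{Bnkrecu01}, and then convert to the polynomial recurrence \eqref{Bnkrecu} by multiplying by $x^k$ and summing. Your coefficient bookkeeping (including the collapses $1+(k-1)=k$ and $1+(n-k)=n-k+1$) and the identification $1+x^2-x-x^3=(1-x)(1+x^2)$ are all accurate; you merely spell out the coefficient extraction and the passage to \eqref{Bnkrecu} in more detail than the paper does.
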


\begin{corollary}\label{unimodal}
For $n\geqslant 3$, the polynomials $\widehat{B}_{n}(x)$ are palindromic and unimodal.
\end{corollary}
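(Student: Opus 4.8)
The plan is to treat the two assertions separately, obtaining palindromicity essentially for free and reserving the real work for unimodality.

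For palindromicity the cleanest route is the manifest symmetry of the grammar $G_2=\{e\to e(x+y),\,x\to x^2+y^2,\,y\to x^2+y^2\}$ under interchanging $x$ and $y$. Let $\tau$ be the algebra automorphism of $\mathbb{Q}[[e,x,y]]$ that fixes $e$ and swaps $x\leftrightarrow y$. Since $\tau$ fixes each of $G_2(e),G_2(x),G_2(y)$, it intertwines with the substitution rules, so $\tau^{-1}D_{G_2}\tau$ is a derivation agreeing with $D_{G_2}$ on the generators; hence $\tau D_{G_2}=D_{G_2}\tau$ and therefore $\tau\bigl(D_{G_2}^n(e)\bigr)=D_{G_2}^n(e)$. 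Reading off coefficients in \eqref{DGe} yields $\widehat{B}(n,k)=\widehat{B}(n,n-k)$ for all $n$ and $k$, so $\widehat{B}_n(x)$ is palindromic for every $n$. Equivalently, the negation map $\sigma\mapsto-\sigma$ on $\mbn$ fixes $\sigma(0)=0$ and exchanges alternating descents with alternating ascents, whence $\altdes_B(-\sigma)=\altasc_B(\sigma)=n-\altdes_B(\sigma)$.

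For unimodality I would induct on $n$ via the coefficient recurrence \eqref{Bnkrecu01}. As $\widehat{B}_{n+1}$ is already known to be palindromic, it suffices to show that the consecutive differences $\Delta_k:=\widehat{B}(n+1,k)-\widehat{B}(n+1,k-1)$ are nonnegative for $1\le k\le\lfloor(n+1)/2\rfloor$. Writing $d_i:=\widehat{B}(n,i)-\widehat{B}(n,i-1)$ and substituting \eqref{Bnkrecu01}, a short computation collapses the telescoping to the key identity
\[
\Delta_k=(k+1)d_{k+1}+(n-k+2)d_k+k\,d_{k-1}+(n-k+3)d_{k-2}.
\]
All four coefficients here are nonnegative, so $\Delta_k\ge 0$ immediately whenever $d_{k-2},d_{k-1},d_k,d_{k+1}\ge 0$; by the inductive unimodality of $\widehat{B}_n$ this already settles every $k$ with $k+1\le\lceil n/2\rceil$. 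The few near-central values of $k$ are then reduced using palindromicity of $\widehat{B}_n$, which forces a vanishing middle difference when $n$ is odd and the relation $d_{\lceil n/2\rceil+1}=-d_{\lceil n/2\rceil}$. For even $n$ this converts the one delicate term into $\Delta_k=d_k+k\,d_{k-1}+(n-k+3)d_{k-2}\ge 0$, finishing that parity.

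The main obstacle is the single central difference when $n$ is odd: writing $n=2m+1$, the palindromic substitution reduces it to the residual inequality
\[
\Delta_{m+1}=(m+3)d_{m-1}-d_m\ge 0,
\]
that is, $\widehat{B}(n,m)-\widehat{B}(n,m-1)\le(m+3)\bigl(\widehat{B}(n,m-1)-\widehat{B}(n,m-2)\bigr)$. This is \emph{not} a consequence of palindromic unimodality alone, since such a sequence may increase arbitrarily sharply just below its peak. To close the gap I would strengthen the inductive hypothesis so as to control the growth of the consecutive differences near the center and propagate that control through the identity above; a natural candidate, which I have checked in low degrees, is a nonnegative expansion of $\widehat{B}_n(x)$ in the basis $\{x^i(1+x^2)^{\lfloor(n-2i)/2\rfloor}(1+x)^{(n-2i)\bmod 2}\}$ arising from rewriting $G_2$ in the symmetric variables $x+y$ and $xy$. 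Verifying that this refined positivity is preserved by the recurrence, and extracting from it the bound $d_m\le(m+3)d_{m-1}$, is where I expect essentially all of the difficulty to lie; alternatively, unimodality can be deduced from the connection to the (genuinely $\gamma$-positive) left peak polynomials developed in the main theorem.
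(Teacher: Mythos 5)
You correctly dispose of palindromicity (your grammar-symmetry argument is exactly the paper's), and your inductive framework for unimodality is also the paper's: the difference identity
\[
\Delta_k=(k+1)d_{k+1}+(n-k+2)d_k+k\,d_{k-1}+(n-k+3)d_{k-2}
\]
is a cleaner, exact form of the paper's expansion of $I_{n+1,k}=\widehat{B}(n+1,k)-\widehat{B}(n+1,k-1)$, your even-$n$ central case agrees with the paper's treatment, and your reduction of the odd-$n$ central case to $\Delta_{m+1}=(m+3)d_{m-1}-d_m$ is precisely the paper's inequality \eqref{Inrecu}. However, at exactly this point your proof stops: you leave $(m+3)d_{m-1}\geqslant d_m$ unproven, and this single inequality is where the actual content of the corollary lies. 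Your two suggested repairs are not carried out, and the second one is unsound as stated: in the expansion \eqref{Bnxgamma01} the basis polynomials $(1+x^2)^k(1+x)^{n-2k}$ are not individually unimodal (e.g.\ $(1+x^2)^m$ for $n=2m$ has internal zero coefficients), so nonnegativity of the numbers $2^kM(n,k)$ does not by itself imply unimodality of $\widehat{B}_n$.

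The gap closes without any strengthened induction hypothesis: you are right that palindromic unimodality of the abstract sequence $\widehat{B}(2m+1,\cdot)$ does not imply the inequality, but $d_m$ and $d_{m-1}$ are not abstract --- they themselves satisfy your identity one level down, and that is what the paper exploits. Writing $d'_j=\widehat{B}(2m,j)-\widehat{B}(2m,j-1)$, your identity at level $2m$ with $k=m$ and $k=m-1$, together with $d'_{m+1}=-d'_m$ (palindromicity of $\widehat{B}_{2m}$), gives $d_m=d'_m+m\,d'_{m-1}+(m+3)d'_{m-2}$ and $d_{m-1}=m\,d'_m+(m+3)d'_{m-1}+(m-1)d'_{m-2}+(m+4)d'_{m-3}$, hence
\[
(m+3)d_{m-1}-d_m=(m^2+3m-1)\,d'_m+(m^2+5m+9)\,d'_{m-1}+(m+3)(m-2)\,d'_{m-2}+(m+3)(m+4)\,d'_{m-3}.
\]
For $m\geqslant 2$ every coefficient here is nonnegative, and every $d'_j$ with $j\leqslant m$ is nonnegative by the induction hypothesis at level $2m$ (for $j\leqslant 0$ these differences vanish or equal $\widehat{B}(2m,0)$), so $\Delta_{m+1}\geqslant 0$; the cases $m\leqslant 1$, i.e.\ $\widehat{B}_3$ and $\widehat{B}_4$, are the base cases checked from the initial values. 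This is exactly the paper's computation following \eqref{Inrecu}, rewritten in your difference notation. So the missing step requires no new positivity statement, only one more application of the recurrence \eqref{Bnkrecu01} combined with palindromicity and unimodality one further level down.
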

\begin{proof}
In the grammar $G_2$, the symmetry of $x$ and $y$ implies that $\widehat{{B}}_n(x)$ is palindromic.
From the initial values, we see that both $\widehat{{B}}_3(x)$ and $\widehat{{B}}_4(x)$ are unimodal.
We proceed by induction. When $n=2m$,
assume that $$\widehat{B}(n,0)<\widehat{B}(n,1)<\cdots <\widehat{B}(n,m)>\widehat{B}(n,m+1)>\cdots >\widehat{B}(n,n).$$
When $n=2m+1$,
assume that $$\widehat{B}(n,0)<\widehat{B}(n,1)<\cdots <\widehat{B}(n,m)=\widehat{B}(n,m+1)>\widehat{B}(n,m+2)>\cdots >\widehat{B}(n,n).$$
For $n\geqslant 4$ and $0\leqslant k\leqslant \lrf{n/2}$,
it follows from~\eqref{Bnkrecu01} that
\begin{align*}
&I_{n+1,k}=\widehat{B}(n+1,k)-\widehat{B}(n+1,k-1)\\
&=(k+1)\widehat{B}(n,k+1)+(n-2k+1)\widehat{B}(n,k)+(n-2k+3)\widehat{B}(n,k-2)-\\
&(n-2k+2)\widehat{B}(n,k-1)-(n-k+3)\widehat{B}(n,k-3)\\
&\geqslant \widehat{B}(n,k+1)+ k\widehat{B}(n,k-2)+(n-2k+1)\widehat{B}(n,k)+\\
&(n-2k+3)\widehat{B}(n,k-2)-(n-2k+2)\widehat{B}(n,k-1)-(n-k+3)\widehat{B}(n,k-3)\\
&=\widehat{B}(n,k+1)+(n-2k+1)\widehat{B}(n,k)+(n-k+3)\widehat{B}(n,k-2)-\\
&(n-2k+2)\widehat{B}(n,k-1)-(n-k+3)\widehat{B}(n,k-3)\\
&= \widehat{B}(n,k+1)+(n-2k+1)\widehat{B}(n,k)-(n-2k+2)\widehat{B}(n,k-1)]+\\
& (n-k+3)(\widehat{B}(n,k-2)-\widehat{B}(n,k-3)).
\end{align*}
Therefore, for any $0\leqslant k\leqslant \lrf{n/2}$, we get
$$I_{n+1,k}\geqslant\widehat{B}(n,k+1)+(n-2k+1)\widehat{B}(n,k)-(n-2k+2)\widehat{B}(n,k-1)]\geqslant 0.$$
When $n=2m+1$, we need to show that $$\widehat{B}(2m+2,m+1)> \widehat{B}(2m+2,m).$$
From the above discussion, we get
\begin{align*}
&I_{2m+2,m+1}=\widehat{B}(2m+2,m+1)-\widehat{B}(2m+2,m)\\
&\geqslant\widehat{B}(2m+1,m+2)-\widehat{B}(2m+1,m)+(m+3)\left(\widehat{B}(2m+1,m-1)-\widehat{B}(2m+1,m-2)\right)\\
&=-\left(\widehat{B}(2m+1,m)-\widehat{B}(2m+1,m-1)\right)+(m+3)\left(\widehat{B}(2m+1,m-1)-\widehat{B}(2m+1,m-2)\right).
\end{align*}
Hence we obtain
\begin{equation}\label{Inrecu}
I_{2m+2,m+1}\geqslant -I_{2m+1,m}+(m+3)I_{2m+1,m-1}.
\end{equation}

Combining~\eqref{Inrecu} and the following expression,
\begin{align*}
I_{n+1,k}&=(k+1)\widehat{B}(n,k+1)+(n-2k+1)\widehat{B}(n,k)+(n-2k+3)\widehat{B}(n,k-2)-\\
&(n-2k+2)\widehat{B}(n,k-1)-(n-k+3)\widehat{B}(n,k-3),
\end{align*}
we have
\begin{align*}
&-I_{2m+1,m}+(m+3)I_{2m+1,m-1}\\
&=2\widehat{B}(2m,m-1)+(m+3)\widehat{B}(2m,m-3)-(m+1)\widehat{B}(2m,m+1)-\\
&\widehat{B}(2m,m)-3\widehat{B}(2m,m-2)+(m+3)m\widehat{B}(2m,m)+3(m+3)\widehat{B}(2m,m-1)+\\
&5(m+3)\widehat{B}(2m,m-3)-4(m+3)\widehat{B}(2m,m-2)-(m+3)(m+4)\widehat{B}(2m,m-4).
\end{align*}
Since $\widehat{B}(2m,m+1)=\widehat{B}(2m,m-1)$, it follows that
\begin{align*}
&-I_{2m+1,m}+(m+3)I_{2m+1,m-1}\\
&=(m^2+3m-1)\widehat{B}(2m,m)+(2m+10)\widehat{B}(2m,m-1)+6(m+3)\widehat{B}(2m,m-3)-\\
&(4m+15)\widehat{B}(2m,m-2)-(m+3)(m+4)\widehat{B}(2m,m-4)\\
&\geqslant (m^2+m-6)\widehat{B}(2m,m)+6(m+3)\widehat{B}(2m,m-3)-(m+3)(m+4)\widehat{B}(2m,m-4)\\
&\geqslant (m+3)(m+4)\widehat{B}(2m,m-3)-(m+3)(m+4)\widehat{B}(2m,m-4)\geqslant 0.
\end{align*}
This completes the proof.
\end{proof}

\begin{proposition}
We have
\begin{equation}
\widehat{{{B}}}(x;z)=\sum_{n=0}^\infty \widehat{B}_{n}(x)\frac{z^n}{n!}=\frac{x-1}{(x-1)\cos(z(x-1))-(x+1)\sin(z(x-1))}.
\end{equation}
\end{proposition}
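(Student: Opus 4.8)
The plan is to translate the grammar $G_2$ of Theorem~\ref{lemma02} into an autonomous system of ordinary differential equations and to solve it in closed form. Introduce functions $e(z),x(z),y(z)$ determined by
\begin{equation*}
e'(z)=e(z)\bigl(x(z)+y(z)\bigr),\quad x'(z)=x(z)^2+y(z)^2,\quad y'(z)=x(z)^2+y(z)^2,
\end{equation*}
with initial data $e(0)=1$, $x(0)=x$, $y(0)=y$. The standard correspondence between a context-free grammar and its formal flow gives that the $n$-th derivative of $e(z)$ at $z=0$ equals $D_{G_2}^n(e)$ evaluated at the initial data, so that $\sum_{n\ge0}\tfrac{z^n}{n!}D_{G_2}^n(e)=e(z)$. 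By~\eqref{DGe} this iterate equals $e\sum_{k}\widehat{B}(n,k)x^ky^{n-k}$; hence, specializing $y=1$ while keeping $e(0)=1$, we obtain $e(z)=\sum_{n\ge0}\widehat{B}_n(x)\tfrac{z^n}{n!}=\widehat{{{B}}}(x;z)$. It therefore suffices to produce a closed form for $e(z)$.

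The key observation is that $x'(z)=y'(z)$, so the difference $d:=x(z)-y(z)=x-y$ is a constant of the motion. Writing $s(z)=x(z)+y(z)$ and using $x^2+y^2=\tfrac12\bigl(s^2+d^2\bigr)$, the pair of quadratic equations collapses to the single separable Riccati equation $s'(z)=s(z)^2+d^2$. Integrating gives $s(z)=d\tan(dz+\phi)$, where the phase $\phi$ is fixed by $s(0)=x+y$, that is $\tan\phi=(x+y)/(x-y)$. The equation for $e$ is then linear, $e'(z)/e(z)=s(z)$, and since $\int_0^z s(t)\,dt=\bigl[-\log\cos(dt+\phi)\bigr]_0^z=\log\frac{\cos\phi}{\cos(dz+\phi)}$, we get
\begin{equation*}
e(z)=\exp\!\int_0^z s(t)\,dt=\frac{\cos\phi}{\cos(dz+\phi)}.
\end{equation*}

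Finally I would specialize $y=1$, so that $d=x-1$ and $\tan\phi=(x+1)/(x-1)$. Expanding $\cos(dz+\phi)=\cos(dz)\cos\phi-\sin(dz)\sin\phi$ and dividing numerator and denominator by $\cos\phi$ gives $e(z)=\bigl(\cos(dz)-\tan\phi\,\sin(dz)\bigr)^{-1}$; substituting $\tan\phi=(x+1)/(x-1)$ and clearing the fraction produces exactly $\frac{x-1}{(x-1)\cos(z(x-1))-(x+1)\sin(z(x-1))}$, as claimed. The computation is essentially routine once the invariant $d=x-y$ is spotted; the only point demanding care is the grammar-to-flow correspondence invoked in the first paragraph, namely that the exponential generating function of the iterates $D_{G_2}^n(e)$ solves the associated differential system. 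As an independent check, one can verify that this closed form satisfies the first-order linear partial differential equation obtained by multiplying the recurrence~\eqref{Bnkrecu} by $z^n/n!$ and summing, together with the initial value $\widehat{{{B}}}(x;0)=1$.
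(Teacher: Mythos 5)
Your proof is correct, but it takes a genuinely different route from the paper's. The paper starts from the recurrence \eqref{Bnkrecu} (itself a corollary of the grammar $G_2$), rewrites it as the linear PDE \eqref{EGF} for $\widehat{B}(x;z)$, and then merely \emph{verifies} that the stated closed form satisfies that PDE together with the initial conditions $\widehat{B}(x;0)=1$ and $\widehat{B}(0;z)=1/(\cos z-\sin z)$; this is a verification argument that presupposes the answer, with uniqueness coming from the fact that the PDE determines each coefficient of $z^{n+1}/(n+1)!$ from the previous one. You instead work directly with Theorem~\ref{lemma02} via \eqref{DGe}: you pass from the derivation $D_{G_2}$ to its formal flow (the autonomous system $e'=e(x+y)$, $x'=y'=x^2+y^2$), spot the conserved quantity $d=x-y$, reduce to the Riccati equation $s'=s^2+d^2$ for $s=x+y$, and integrate to \emph{derive} the closed form constructively. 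The flow correspondence you invoke is legitimate: since $D_{G_2}$ is a derivation, the operator $\sum_{n\geqslant 0}\frac{z^n}{n!}D_{G_2}^n$ is a formal ring homomorphism, so applying it to the letters produces the unique formal power series solution of the ODE system with the letters as initial data, and the subsequent specialization $e=1$, $y=1$ is harmless (note $D_{G_2}^n(e)$ is linear in $e$). What your approach buys is an explanation of where the formula comes from, usable even without guessing it in advance; what the paper's buys is brevity, since once the closed form is known, checking the PDE and the initial conditions is mechanical. Both arguments ultimately rest on the same grammatical result, yours through \eqref{DGe} directly and the paper's through the recurrence \eqref{Bnkrecu} extracted from it.
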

\begin{proof}
Rewrite~\eqref{Bnkrecu} in terms of generating function $\widehat{B}$, we obtain
\begin{equation}\label{EGF}
\frac{\partial}{\partial z}\widehat{B}(x;z)=(1+x)\widehat{B}(x;z)+(1+x^2)z\frac{\partial}{\partial z}\widehat{B}(x;z)+(1-x)(1+x^2)\frac{\partial}{\partial x}\widehat{B}(x;z).
\end{equation}
It is routine to check that the generating function
\begin{equation*}
\widetilde{{B}}(x;z)=\frac{x-1}{(x-1)\cos(z(x-1))-(x+1)\sin(z(x-1))}
\end{equation*}
satisfies~\eqref{EGF}. And this generating function gives
$\widetilde{{B}}(x;0)=1$ and $$\widetilde{{B}}(0;z)=\frac{1}{\cos(z)-\sin(z)}=\sum_{n=0}^\infty \widehat{{B}}_{n}(0)\frac{z^n}{n!}.$$
Hence $\widetilde{{B}}(x;z)=\widehat{B}(x;z)$. This completes the proof.
\end{proof}

%%%%%%%%%%%%%%%%%%%%%%%%%%%%5%%%%
%\section{Derivative polynomials}
%%%%%%%%%%%%%%%%%%%%%%%%%%%%%%%%%%%%%%%%%%%%%%%%%%%%%%%%%%%%%%%%%%%%%%%%%%%%%%%%%%
%%%%%%%%%%%%%%%%%%%%%%%%%%%%%%%%%%%%%%%%%%%%%%%%%%%%%%%%%%%%%%%%%%%%%%%%%%%%%%%%%%
%%%%%%%%%%%%%%%%%%%%%%%%%%%%%%%%%%%%%%%%%%%%%%%%%%
%%%%%%%%%%%%%%%%%%%%%%%%%%%%5%%%%
%%%%%%%%%%%%%%%%%%%%%%%%%%%%5%%%%
\subsection{Alternating descents of type $B$ and left peaks}
%%%%%%%%%%%%%%%%%%%%%%%%%%%%%%%%%%%%%%%%%%%%%%%%%%%%%%%%%%%%%%%%%%%%%%%%%%%%%%%%%%
%%%%%%%%%%%%%%%%%%%%%%%%%%%%%%%%%%%%%%%%%%%%%%%%%%%%%%%%%%%%%%%%%%%%%%%%%%%%%%%%%%
%%%%%%%%%%%%%%%%%%%%%%%%%%%%%%%%%%%%%%%%%%%%%%%%%%
%%%%%%%%%%%%%%%%%%%%%%%%%%%%5%%%%
%%%%%%%%%%%%%%%%%%%%%%%%%%%%%%%
%%%%%%%%%%%%%%%%%%%%%%%%%%%%%%%%%%%%%%%%%%%%%%%%%%%%%%%%%%%%%%%%%%%%%%%%%%%%%%%%%%
\hspace*{\parindent}

A {\it left peak} of $\pi\in\msn$ is an index $i\in[n-1]$ such that $\pi(i-1)<\pi(i)>\pi(i+1)$, where $\pi(0)=0$.
Denote by $\lpk(\pi)$ the number of left peaks in $\pi$. Let $M(n,k)$ be the number of permutations in $\msn$ with $k$ left peaks.
The {\it left peak polynomials} are defined by
$$M_n(x)=\sum_{k=0}^{\lrf{n/2}}M(n,k)x^k.$$
Gessel~\cite[A008971]{Sloane} obtained the following exponential generating function:
\begin{equation}\label{Gessel}
M(x;z)=1+\sum_{n=1}^\infty M_n(x)\frac{z^n}{n!}=\frac{\sqrt{1-x}}{\sqrt{1-x}\cosh(z\sqrt{1-x})-\sinh(z\sqrt{1-x})}.
\end{equation}
By using the theory of enriched $P$-partitions, Petersen~\cite[Proposition~4.15]{Petersen07} obtained that
\begin{equation}\label{Bnxgamma}
B_n(x)=\sum_{k=0}^{\lrf{n/2}}4^iM(n,k)x^k(1+x)^{n-2k},
\end{equation}
which has been extensively studied in recent years, see~\cite{Chow08,Petersen15} and references therein.

The second main result of this paper is given as follows.
\begin{theorem}
We have
  \begin{equation}\label{Bnxgamma01}
\widehat{B}_n(x)=\sum_{k=0}^{\lrf{n/2}}2^kM(n,k)(1+x^2)^k(1+x)^{n-2k}.
\end{equation}
Equivalently, $$\widehat{B}_n(x)=(1+x)^nM_n\left(\frac{2+2x^2}{(1+x)^2}\right).$$
\end{theorem}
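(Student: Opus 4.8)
The plan is to establish the equivalent closed form $\widehat{B}_n(x)=(1+x)^nM_n\!\left(\frac{2+2x^2}{(1+x)^2}\right)$ by comparing exponential generating functions, since a closed form for $\widehat{B}(x;z)=\sum_{n\geqslant0}\widehat{B}_n(x)z^n/n!$ was obtained in the preceding Proposition and Gessel's formula~\eqref{Gessel} supplies $M(x;z)$. Write $u=\frac{2+2x^2}{(1+x)^2}$. Since $M(u;z)=\sum_{n\geqslant0}M_n(u)z^n/n!$, the substitution $z\mapsto(1+x)z$ shows that the sequence $(1+x)^nM_n(u)$ has exponential generating function $M(u;(1+x)z)=\sum_{n\geqslant0}(1+x)^nM_n(u)\,z^n/n!$. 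Extracting the coefficient of $z^n/n!$ therefore reduces the theorem to the single functional identity $\widehat{B}(x;z)=M\!\left(u;(1+x)z\right)$.

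Before substituting $u$ into Gessel's formula I would first record that $M(x;z)$ is genuinely a formal power series in $x$: setting $s=\sqrt{1-x}$, both $\cosh(zs)$ and $\sinh(zs)/s$ are even in $s$, so $M(x;z)=1/\bigl(\cosh(zs)-\sinh(zs)/s\bigr)$ depends only on $s^2=1-x$. This legitimises substituting $u$ even though the relevant square root will be imaginary. Indeed a direct computation gives $1-u=1-\frac{2+2x^2}{(1+x)^2}=-\frac{(1-x)^2}{(1+x)^2}$, so we may take $\sqrt{1-u}=i\,\frac{1-x}{1+x}$. With $w=(1+x)z$ this makes $w\sqrt{1-u}=i(1-x)z$, and the identities $\cosh(it)=\cos t$, $\sinh(it)=i\sin t$ turn the hyperbolic functions in Gessel's formula into ordinary trigonometric ones; the common factor $i$ cancels between numerator and denominator, leaving $M(u;(1+x)z)=\frac{1-x}{(1-x)\cos((1-x)z)-(1+x)\sin((1-x)z)}$.

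It then remains to check that this agrees with the closed form $\frac{x-1}{(x-1)\cos(z(x-1))-(x+1)\sin(z(x-1))}$ for $\widehat{B}(x;z)$, which is immediate: cosine is even so $\cos(z(x-1))=\cos((1-x)z)$, sine is odd so $\sin(z(x-1))=-\sin((1-x)z)$, and multiplying the latter expression's numerator and denominator by $-1$ reproduces the former. This gives the functional identity and hence, by coefficient extraction, the theorem. I expect the only real obstacle to be one of rigour rather than difficulty, namely justifying a substitution that sends $x$ to a value with imaginary $\sqrt{1-x}$; the evenness observation above disposes of this cleanly, after which the trigonometric bookkeeping and sign-matching are routine. (As an alternative one could instead verify that the right-hand side of~\eqref{Bnxgamma01} satisfies the recurrence~\eqref{Bnkrecu} with the correct initial data, but the generating-function comparison is the most direct route given that both exponential generating functions are already in hand.)
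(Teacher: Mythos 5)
Your proof is correct and takes essentially the same route as the paper: the paper's proof consists precisely of substituting $x\rightarrow\frac{2+2x^2}{(1+x)^2}$ and $z\rightarrow(1+x)z$ into Gessel's formula~\eqref{Gessel} and matching the result against the closed form for $\widehat{B}(x;z)$ from the preceding Proposition. Your extra steps (observing that $M(x;z)$ depends only on $1-x$ so the imaginary square root is harmless, and the trigonometric sign bookkeeping) simply make rigorous the details the paper leaves implicit.
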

\begin{proof}
Substituting $x\rightarrow\frac{2+2x^2}{(1+x)^2}$ and $z\rightarrow (1+x)z$ in~\eqref{Gessel}, we get
$$M\left(\frac{2+2x^2}{(1+x)^2};(1+x)z\right)=\frac{x-1}{(x-1)\cos(z(x-1))-(x+1)\sin(z(x-1))},$$
which yields the desired result.
\end{proof}

%%%%%%%%%%%%%%%%%%%%%%%%%%%%5%%%%
%%%%%%%%%%%%%%%%%%%%%%%%%%%%5%%%%
\subsection{Derivative polynomials and alternating Eulerian polynomials}
%%%%%%%%%%%%%%%%%%%%%%%%%%%%%%%%%%%%%%%%%%%%%%%%%%%%%%%%%%%%%%%%%%%%%%%%%%%%%%%%%%
%%%%%%%%%%%%%%%%%%%%%%%%%%%%%%%%%%%%%%%%%%%%%%%%%%%%%%%%%%%%%%%%%%%%%%%%%%%%%%%%%%
%%%%%%%%%%%%%%%%%%%%%%%%%%%%%%%%%%%%%%%%%%%%%%%%%%
%%%%%%%%%%%%%%%%%%%%%%%%%%%%5%%%%
%%%%%%%%%%%%%%%%%%%%%%%%%%%%%%%
%%%%%%%%%%%%%%%%%%%%%%%%%%%%%%%%%%%%%%%%%%%%%%%%%%%%%%%%%%%%%%%%%%%%%%%%%%%%%%%%%%
\hspace*{\parindent}

In 1995, Hoffman~\cite{Hoffman95} introduced the derivative polynomials for tangent and secant:
\begin{equation*}\label{derivapoly-1}
\frac{d^n}{d\theta^n}\tan \theta=P_n(\tan \theta),~
\frac{d^n}{d\theta^n}\sec\theta=\sec\theta \cdot Q_n(\tan \theta).
\end{equation*}
By making use of the chain rule it follows that $P_{n+1}(x)=(1+x^2)\frac{\mathrm{d}}{\mathrm{d}x}P_n(x)$ and
\begin{equation}\label{Qnx-recu}
Q_{n+1}(x)=xQ_n(x)+(1+x^2)\frac{\mathrm{d}}{\mathrm{d}x}Q_n(x),
\end{equation}
with the initial conditions $P_0(x)=x$ and $Q_0(x)=1$.
They may be defined by the following generating functions:
\begin{equation*}\label{Pxz-EGF}
P(x;z)=\sum_{n=0}^\infty P_n(x)\frac{z^n}{n!}=\frac{x+\tan z}{1-x\tan z},
\end{equation*}
\begin{equation}\label{Qxz-EGF}
Q(x;z)=\sum_{n=0}^\infty Q_n(x)\frac{z^n}{n!}=\frac{\sec z}{1-x\tan z}.
\end{equation}
Note that
$\frac{\partial}{\partial z}Q(x;z)=P(x;z)Q(x;z)$.
Hence
\begin{equation}\label{Qnx-cov}
Q_{n+1}(x)=\sum_{k=0}^n\binom{n}{k}P_k(x)Q_{n-k}(x).
\end{equation}
The derivative polynomials have been extensively studied
(see~\cite{Cvijovic09,Franssens07,Hetyei08} for instance).
The reader is referred to~\cite{Josuat14,Ma121} for combinatorial interpretations of the derivative polynomials.

In~\cite{Ma16}, Ma and Yeh showed that
\begin{equation}\label{AnxPnx:01}
2^n(1+x^2)\widehat{A}_n(x)=(1-x)^{n+1}P_n\left(\frac{1+x}{1-x}\right).
\end{equation}
The third main result of this paper is given as follows, which also solves~\cite[Conjecture 5.]{Ma16}.
\begin{theorem}\label{thm03}
We have
$$\widehat{B}_n(x)=(1-x)^nQ_n\left(\frac{1+x}{1-x}\right).$$
\end{theorem}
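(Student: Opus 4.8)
The plan is to prove the identity $\widehat{B}_n(x)=(1-x)^nQ_n\left(\frac{1+x}{1-x}\right)$ by showing that the right-hand side satisfies the same recurrence and initial conditions as $\widehat{B}_n(x)$. This is the most natural route, since we already have in \eqref{Bnkrecu} the recurrence
$$\widehat{B}_{n+1}(x)=(1+n+x+nx^2)\widehat{B}_{n}(x)+(1-x)(1+x^2)\frac{\mathrm{d}}{\mathrm{d}x}\widehat{B}_{n}(x),$$
together with $\widehat{B}_0(x)=1$, and in \eqref{Qnx-recu} the recurrence $Q_{n+1}(x)=xQ_n(x)+(1+x^2)\frac{\mathrm{d}}{\mathrm{d}x}Q_n(x)$ with $Q_0(x)=1$.

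First I would define $R_n(x)=(1-x)^nQ_n\left(\frac{1+x}{1-x}\right)$ and check the base case, which is immediate: $R_0(x)=Q_0\left(\frac{1+x}{1-x}\right)=1=\widehat{B}_0(x)$. The heart of the argument is then to verify that $R_n(x)$ obeys \eqref{Bnkrecu}. To do this I would substitute $u=\frac{1+x}{1-x}$, so that $1+u^2=\frac{2(1+x^2)}{(1-x)^2}$, and compute $R_{n+1}(x)=(1-x)^{n+1}Q_{n+1}(u)$ by applying the $Q$-recurrence. The term $uQ_n(u)$ contributes $(1-x)^{n+1}\cdot\frac{1+x}{1-x}\,Q_n(u)=(1+x)R_n(x)$. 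For the derivative term, I would use the chain rule: since $\frac{\mathrm{d}u}{\mathrm{d}x}=\frac{2}{(1-x)^2}$, we have $\frac{\mathrm{d}}{\mathrm{d}x}R_n(x)=-n(1-x)^{n-1}Q_n(u)+(1-x)^n Q_n'(u)\cdot\frac{2}{(1-x)^2}$, which I can solve for $(1-x)^nQ_n'(u)$ in terms of $\frac{\mathrm{d}}{\mathrm{d}x}R_n(x)$ and $R_n(x)$. Substituting $(1+u^2)Q_n'(u)$ accordingly and collecting terms should reproduce exactly the coefficients $(1+n+x+nx^2)$ and $(1-x)(1+x^2)$ appearing in \eqref{Bnkrecu}.

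The main obstacle will be the bookkeeping in this change of variables: one must carefully track the factors of $(1-x)$ produced by differentiating $(1-x)^n$ versus those produced by the chain-rule factor $\frac{2}{(1-x)^2}$, and confirm that after clearing denominators the polynomial coefficients match \eqref{Bnkrecu} identically rather than merely up to a proportionality. A clean way to organize this is to derive an auxiliary identity expressing $(1+u^2)Q_n'(u)$ entirely in terms of $R_n$ and $R_n'$, namely $(1-x)^{n+1}(1+u^2)Q_n'(u)=2(1+x^2)\left[\frac{n}{1-x}R_n(x)+\frac{1}{(1-x)}\cdot\frac{1-x}{2}\,R_n'(x)\right]$ after simplification; substituting this together with the $(1+u^2)=\frac{2(1+x^2)}{(1-x)^2}$ relation into the $Q$-recurrence and simplifying should yield the claim.

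Alternatively, and perhaps more transparently, I would verify the identity at the level of generating functions. Using the explicit form \eqref{Qxz-EGF}, $Q(x;z)=\frac{\sec z}{1-x\tan z}$, one computes $(1-x)^nQ_n\left(\frac{1+x}{1-x}\right)$ has exponential generating function obtained by substituting the argument $\frac{1+x}{1-x}$ and rescaling $z\mapsto (1-x)z$; this should collapse, via the secant–tangent addition formulas, to precisely the closed form $\frac{x-1}{(x-1)\cos(z(x-1))-(x+1)\sin(z(x-1))}$ established in the Proposition for $\widehat{B}(x;z)$. Comparing coefficients of $z^n/n!$ then gives the theorem. Either route settles the conjecture \cite[Conjecture 5.]{Ma16}; I would likely present the recurrence argument as the main proof, since all the needed recurrences are already in hand.
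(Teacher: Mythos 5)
Your primary route---verifying that $R_n(x)=(1-x)^nQ_n\left(\frac{1+x}{1-x}\right)$ satisfies the recurrence \eqref{Bnkrecu} with the right initial condition---is a genuinely different proof from the paper's. The paper proves the theorem in one line by substituting $x\rightarrow\frac{1+x}{1-x}$ and $z\rightarrow(1-x)z$ into the generating function \eqref{Qxz-EGF} and comparing with the closed form of $\widehat{B}(x;z)$ established in its Proposition; that is exactly your ``alternative'' route. Your recurrence argument buys some independence: it needs only \eqref{Bnkrecu} and \eqref{Qnx-recu}, and bypasses the trigonometric closed form entirely (the paper's proof is hostage to the Proposition, which was itself only checked ``routinely'' against \eqref{EGF}). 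And the computation does close. With $u=\frac{1+x}{1-x}$ one has $1+u^2=\frac{2(1+x^2)}{(1-x)^2}$ and $\frac{\mathrm{d}u}{\mathrm{d}x}=\frac{2}{(1-x)^2}$, so that $R_n'(x)=-n(1-x)^{n-1}Q_n(u)+2(1-x)^{n-2}Q_n'(u)$, which gives
\begin{equation*}
(1-x)^{n+1}(1+u^2)Q_n'(u)=n(1+x^2)R_n(x)+(1-x)(1+x^2)R_n'(x);
\end{equation*}
adding $(1-x)^{n+1}uQ_n(u)=(1+x)R_n(x)$ yields $R_{n+1}(x)=(1+n+x+nx^2)R_n(x)+(1-x)(1+x^2)R_n'(x)$, i.e.\ \eqref{Bnkrecu}, and $R_0(x)=1$ forces $R_n=\widehat{B}_n$ for all $n$.

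One correction, though: the auxiliary identity you actually wrote down is wrong. Your right-hand side simplifies to $\frac{2n(1+x^2)}{1-x}R_n(x)+(1+x^2)R_n'(x)$, which for $n\geqslant 1$ is not even a polynomial (it has a pole at $x=1$, since $R_n(1)=\widehat{B}_n(1)=2^nn!\neq 0$), whereas $(1-x)^{n+1}(1+u^2)Q_n'(u)$ is a polynomial; the correct expression is the display above. Concretely, your derivative term is missing a factor $(1-x)$ and your first term carries $\frac{2n}{1-x}$ where $n$ should stand. Since you flagged exactly this bookkeeping as the main obstacle and the corrected identity follows in two lines from the chain-rule formula for $R_n'$, this is a repairable slip rather than a flaw in the method---but as written, substituting your formula into the $Q$-recurrence would not reproduce \eqref{Bnkrecu}.
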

\begin{proof}
Substituting $x\rightarrow\frac{1+x}{1-x}$ and $z\rightarrow (1-x)z$ in~\eqref{Qxz-EGF}, one can immediately get
one can immediately get
$$Q\left(\frac{1+x}{1-x};(1-x)z\right)=\frac{x-1}{(x-1)\cos(z(x-1))-(x+1)\sin(z(x-1))},$$
which yields the desired result.
\end{proof}

Combining~\eqref{Qnx-cov},~\eqref{AnxPnx:01} and Theorem~\ref{thm03}, it is routine to verify the following result.
\begin{corollary}
For $n\geqslant 1$, one has
$$\widehat{B}_{n+1}(x)=(1+x)\widehat{B}_{n}(x)+(1+x^2)\sum_{k=0}^{n-1}\binom{n}{k}2^{n-k}\widehat{B}_{k}(x)\widehat{A}_{n-k}(x).$$
\end{corollary}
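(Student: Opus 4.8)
The plan is to derive the identity purely algebraically from the three ingredients flagged in the text, by substituting the change of variable $u=\frac{1+x}{1-x}$ into the convolution formula \eqref{Qnx-cov} and clearing denominators. First I would write \eqref{Qnx-cov} at the point $u$, namely $Q_{n+1}(u)=\sum_{k=0}^n\binom{n}{k}P_k(u)Q_{n-k}(u)$, and then replace every $Q$-value using Theorem~\ref{thm03} in the form $Q_m(u)=(1-x)^{-m}\widehat{B}_m(x)$. After multiplying both sides through by $(1-x)^{n+1}$, the left side becomes $\widehat{B}_{n+1}(x)$ while the general summand on the right becomes $\binom{n}{k}(1-x)^{k+1}P_k(u)\,\widehat{B}_{n-k}(x)$.

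The decisive observation is that the factor $(1-x)^{k+1}P_k(u)$ is exactly what \eqref{AnxPnx:01} evaluates, but only for $k\geqslant 1$; at $k=0$ the identity \eqref{AnxPnx:01} is unavailable, since $\widehat{A}_0$ is not a polynomial. So I would split off the $k=0$ term by hand: since $P_0(x)=x$, one has $P_0(u)=u=\frac{1+x}{1-x}$, whence $(1-x)^{1}P_0(u)=1+x$ and the $k=0$ summand is precisely $(1+x)\widehat{B}_n(x)$. This accounts for the first term on the right side of the claimed formula, which thus appears as a genuine boundary contribution rather than as a $k=0$ instance of the convolution sum.

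For the remaining range $1\leqslant k\leqslant n$, I would substitute $(1-x)^{k+1}P_k(u)=2^k(1+x^2)\widehat{A}_k(x)$ from \eqref{AnxPnx:01}, pulling the common factor $(1+x^2)$ outside the sum. This leaves $\widehat{B}_{n+1}(x)=(1+x)\widehat{B}_n(x)+(1+x^2)\sum_{k=1}^n\binom{n}{k}2^k\widehat{A}_k(x)\widehat{B}_{n-k}(x)$. A final reindexing $k\mapsto n-k$, using the symmetry $\binom{n}{k}=\binom{n}{n-k}$, converts the sum into $\sum_{k=0}^{n-1}\binom{n}{k}2^{n-k}\widehat{B}_k(x)\widehat{A}_{n-k}(x)$, which is the asserted identity.

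I do not anticipate a serious obstacle; the only point requiring care is the separate treatment of the $k=0$ term, which is simultaneously the reason the linear term $(1+x)\widehat{B}_n(x)$ detaches from the convolution and the place where one must avoid a spurious appeal to \eqref{AnxPnx:01} at $n=0$. Everything else is bookkeeping with the change of variable and the binomial symmetry.
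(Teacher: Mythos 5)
Your proof is correct and takes essentially the same approach as the paper: the paper gives no explicit argument beyond asserting that combining \eqref{Qnx-cov}, \eqref{AnxPnx:01} and Theorem~\ref{thm03} is routine, and your computation (substituting $u=\frac{1+x}{1-x}$, clearing the powers of $1-x$, and reindexing) is precisely that verification carried out in detail. Your separate treatment of the $k=0$ term via $(1-x)P_0(u)=1+x$ is the right way to handle the one point where \eqref{AnxPnx:01} cannot be invoked, and it correctly produces the detached term $(1+x)\widehat{B}_n(x)$.
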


%%%%%%%%%%%%%%%%%%%%%%%%%%%%%%%%%%%%%%%%%%%%%%%%%%%%%%%%%%%%%%%%%%%%%%%%%%%%%%%%%%
%%%%%%%%%%%%%%%%%%%%%%%%%%%%%%%%%%%%%%%%%%%%%%%%%%%%%%%%%%%%%%%%%%%%%%%%%%%%%%%%%%
%%%%%%%%%%%%%%%%%%%%%%%%%%%%%%%%%%%%%%%%%%%%%%%%%%
%%%%%%%%%%%%%%%%%%%%%%%%%%%%5%%%%
%%%%%%%%%%%%%%%%%%%%%%%%%%%%%%%
%%%%%%%%%%%%%%%%%%%%%%%%%%%%%%%%%%%%%%%%%%%%%%%%%%%%%%%%%%%%%%%%%%%%%%%%%%%%%%%%%%
%%%%%%%%%%%%%%%%%%%%%%%%%%%%5%%%%
%%%%%%%%%%%%%%%%%%%%%%%%%%%%5%%%%
\subsection{Alternating gamma expansions and left peak polynomials}
%%%%%%%%%%%%%%%%%%%%%%%%%%%%%%%%%%%%%%%%%%%%%%%%%%%%%%%%%%%%%%%%%%%%%%%%%%%%%%%%%%
%%%%%%%%%%%%%%%%%%%%%%%%%%%%%%%%%%%%%%%%%%%%%%%%%%%%%%%%%%%%%%%%%%%%%%%%%%%%%%%%%%
%%%%%%%%%%%%%%%%%%%%%%%%%%%%%%%%%%%%%%%%%%%%%%%%%%
%%%%%%%%%%%%%%%%%%%%%%%%%%%%5%%%%
%%%%%%%%%%%%%%%%%%%%%%%%%%%%%%%
%%%%%%%%%%%%%%%%%%%%%%%%%%%%%%%%%%%%%%%%%%%%%%%%%%%%%%%%%%%%%%%%%%%%%%%%%%%%%%%%%%
\hspace*{\parindent}

Let $f(x)=\sum_{i=0}^nf_ix^i$. If $f(x)=\sum_{k=0}^{\lfloor n/2\rfloor}\gamma_kx^k(1+x)^{n-2k}$, we call $\{\gamma_k\}_{k=0}^{\lfloor n/2\rfloor}$ the {\it $\gamma$-vector} of $f$. If the $\gamma$-vector of $f$ is nonnegative, then we say that $f$ is $\gamma$-positive. Clearly, that $\gamma$-positivity implies unimodality.
The reader is referred to~\cite{Petersen15} for more details.

Let $\pi\in\msn$.
We say that $\pi$ has no {\it double descents} if there is no
index $i\in [n-2]$ such that $\pi(i)>\pi(i+1)>\pi(i+2)$.
The permutation $\pi$ is called {\it simsun} if for each $k\in [n]$, the
subword of $\pi$ restricted to $[k]$ (in the order
they appear in $\pi$) contains no double descents.
There has been much recent work devoted to simsun permutation and its variations, see~\cite{Chow11,Ma16} for instance.
%Below are the polynomials $S_n(x)$ for $n\leqslant 4$:
%$$S_1(x)=1,~S_2(x)=1+x,~S_3(x)=1+4x,~S_4(x)=1+11x+4x^2.$$
Let $\rss_n$ be the set of simsun permutations in $\msn$.
Let
 $$S_n(x)=\sum_{\pi\in\rss_n}x^{\des(\pi)}=\sum_{i=0}^{\lrf{n/2}}S(n,i)x^i.$$
It follows from~\cite[Theorem~1]{Chow11} that
$S_{n+1}(x)=(1+nx)S_{n}(x)+x(1-2x)S_{n}'(x)$
for $n\geqslant 1$, with the initial conditions $S_{0}(x)=S_1(x)=1$.
Combining~\cite[Corollary~3.2]{Branden08} and~\cite[Proposition~1]{Ma16}, a gamma expansion of Eulerian polynomials is given as follows:
$$A_{n+1}(x)=\sum_{i=0}^{\lrf{n/2}}2^{i}S(n,i)x^i(1+x)^{n-2i}.$$
Very recently, Lin etal.~\cite{Lin2011} proved that
\begin{equation}\label{Anx-gamma}
\widehat{A}_n(x)=\sum_{k=0}^{\lrf{({n-1})/{2}}}\eta_{n,k}(-2x)^{k}(1+x)^{n-1-2k},
\end{equation}
where $\eta_{n,k}$ are positive integers for all $0\leqslant k\leqslant \lrf{({n-1})/{2}}$.
Let $\eta_n(x)=\sum_{k=0}^{\lrf{({n-1})/{2}}}\eta_{n,k}x^k$. Then $\eta_n(x-1)=S_{n-1}(x)$.
Therefore, the polynomial $\widehat{A}_n(t)$ has $\gamma$-vector alternates in sign.

Recall that the number of left peaks of $\pi\in\msn$ is defined as follows:
$$\lpk(\pi)=\#\left\{i\in [n-1]: \pi(i-1)<\pi(i)>\pi(i+1)\right\},$$
where $\pi(0)=0$.
We can now present the fourth main result of this paper.
\begin{theorem}
For $n\geqslant 0$, we have
\begin{equation}\label{Bnx-gamma}
\widehat{B}_n(x)=\sum_{i=0}^{\lrf{n/2}}\xi_{n,i}(-4x)^{i}(1+x)^{n-2i},
\end{equation}
where
\begin{equation}\label{xi-gamma-recu}
\xi_{n+1,i}=(1+2n-6i)\xi_{n,i}+(n-2i+2)\xi_{n,i-1}-4(i+1)\xi_{n,i+1},
\end{equation}
with the initial conditions $\xi_{0,0}=1$ and $\xi_{0,i}=0$ for $i\neq 0$.
Let $\xi_n(x)=\sum_{i=0}^{\lrf{n/2}}\xi_{n,i}x^i$. Then
\begin{equation}\label{xiQn}
\xi_n(x-1)=x^{\frac{n}{2}}Q_n\left(\frac{1}{\sqrt{x}}\right)=\sum_{\pi\in\msn}(x+1)^{\lpk(\pi)}.
\end{equation}
Equivalently, we have
$$\xi_n(x-2)=\sum_{\pi\in\msn}x^{\lpk(\pi)}.$$ Therefore, the numbers $\xi_{n,i}$ are positive integers for all $0\leqslant i\leqslant \lrf{n/2}$,
and so the polynomial $\widehat{B}_n(x)$ has $\gamma$-vector alternates in sign.
\end{theorem}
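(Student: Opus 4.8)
The plan is to route everything through the derivative polynomials $Q_n$ and Theorem~\ref{thm03}, defining the candidate $\gamma$-coefficients outright and then verifying each assertion in turn. First I would set $\xi_n(t):=(t+1)^{n/2}Q_n\!\left((t+1)^{-1/2}\right)$ and check it is genuinely a polynomial in $t$ of degree $\lrf{n/2}$: the recurrence \eqref{Qnx-recu} with $Q_0(x)=1$ shows by induction that every exponent occurring in $Q_n(y)$ is congruent to $n$ modulo $2$, so $y^nQ_n(1/y)=\sum_j q_{n,j}y^{2j}$ is a polynomial in $y^2$, and substituting $y^2=t+1$ makes $\xi_n(t)=\sum_j q_{n,j}(t+1)^j=\sum_i\xi_{n,i}t^i$ a polynomial with no surviving square roots. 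This definition hands us the first equality of \eqref{xiQn}, namely $\xi_n(x-1)=x^{n/2}Q_n(1/\sqrt{x})$, by construction.

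Next I would establish the $\gamma$-expansion \eqref{Bnx-gamma}. Regrouping the basis, $\sum_i\xi_{n,i}(-4x)^i(1+x)^{n-2i}=(1+x)^n\xi_n\!\left(\tfrac{-4x}{(1+x)^2}\right)$; putting $t=-4x/(1+x)^2$ gives $t+1=\left(\tfrac{1-x}{1+x}\right)^2$ and $(t+1)^{-1/2}=\tfrac{1+x}{1-x}$, so the right-hand side collapses to $(1-x)^nQ_n\!\left(\tfrac{1+x}{1-x}\right)=\widehat{B}_n(x)$ by Theorem~\ref{thm03}. Since $\xi_n$ involves only integer powers of $t+1$, every step here is an identity of rational functions in $x$, so \eqref{Bnx-gamma} follows. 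For the recurrence \eqref{xi-gamma-recu} I would translate \eqref{Qnx-recu} into a recurrence for $\xi_n$: differentiating $\xi_n(t)=s^nQ_n(1/s)$ with $s=\sqrt{t+1}$ and feeding in $Q_{n+1}=xQ_n+(1+x^2)Q_n'$ yields
$$\xi_{n+1}(t)=(1+2n+nt)\,\xi_n(t)-2(t+1)(t+2)\,\xi_n'(t),$$
and extracting the coefficient of $t^i$ reproduces \eqref{xi-gamma-recu} after routine bookkeeping, the initial data being $\xi_0(t)=Q_0=1$. (Alternatively one may substitute the ansatz directly into \eqref{Bnkrecu}.)

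The \emph{crux} is the second equality of \eqref{xiQn}, $x^{n/2}Q_n(1/\sqrt{x})=\sum_{\pi\in\msn}(x+1)^{\lpk(\pi)}=M_n(x+1)$, which I would prove at the level of exponential generating functions. On one hand $\sum_n x^{n/2}Q_n(1/\sqrt{x})\tfrac{z^n}{n!}=Q(x^{-1/2};x^{1/2}z)$, and clearing $\cos(\sqrt{x}\,z)$ in \eqref{Qxz-EGF} rewrites this as $\bigl(\cos(\sqrt{x}\,z)-x^{-1/2}\sin(\sqrt{x}\,z)\bigr)^{-1}$. On the other hand, substituting $x\mapsto x+1$ in Gessel's formula \eqref{Gessel} turns $\sqrt{1-x}$ into $i\sqrt{x}$ and converts the hyperbolic functions into $\cos(\sqrt{x}\,z)$ and $i\sin(\sqrt{x}\,z)$, and after cancelling the common factor $i\sqrt{x}$ one obtains the very same expression. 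Matching coefficients of $z^n/n!$ gives the identity. This is where I expect the real work to lie: it is the conceptual bridge identifying the secant derivative polynomial $Q_n$ with the left-peak enumerator, and one must handle the passage through imaginary arguments carefully, although it is ultimately a formal manipulation of power series.

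Finally I would assemble the positivity statement. Combining the two halves of \eqref{xiQn} gives $\xi_n(x-1)=\sum_\pi(x+1)^{\lpk(\pi)}$, equivalently $\xi_n(x-2)=M_n(x)=\sum_k M(n,k)x^k$; since the $M(n,k)$ are nonnegative integers with $M\bigl(n,\lrf{n/2}\bigr)>0$, expanding $M_n(t+2)=\sum_k M(n,k)(t+2)^k$ shows $\xi_{n,i}=\sum_{k\ge i}\binom{k}{i}2^{k-i}M(n,k)$ is a positive integer for each $0\le i\le\lrf{n/2}$. Reading \eqref{Bnx-gamma} as $\widehat{B}_n(x)=\sum_i\bigl((-4)^i\xi_{n,i}\bigr)x^i(1+x)^{n-2i}$ then exhibits the $\gamma$-vector $(-4)^i\xi_{n,i}$, whose sign alternates with $i$, completing the proof.
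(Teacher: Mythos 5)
Your proposal is correct, but it takes a genuinely different route from the paper's. The paper stays inside its grammatical framework: setting $a=x+y$, $b=xy$ converts the grammar $G_2$ of Theorem~\ref{lemma02} into $G_3=\{e\rightarrow ea,\ a\rightarrow 2a^2-4b,\ b\rightarrow a^3-2ab\}$, and induction on $D_{G_3}^n(e)$ produces the expansion \eqref{Bnx-gamma} and the recurrence \eqref{xi-gamma-recu} simultaneously; the first equality in \eqref{xiQn} is then proved by checking that $E_n(x)=\xi_n(x^2-1)$ and $F_n(x)=x^nQ_n(1/x)$ satisfy one and the same recurrence; and the second equality --- the bridge to left peaks --- is not proved at all but cited from \cite[Theorem~2]{Ma121}. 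You instead take $\xi_n(t):=(t+1)^{n/2}Q_n\bigl((t+1)^{-1/2}\bigr)$ as the \emph{definition} (legitimate once you observe, via the parity induction on \eqref{Qnx-recu}, that $y^nQ_n(1/y)$ is a polynomial in $y^2$), so the first equality of \eqref{xiQn} is free; \eqref{Bnx-gamma} then drops out of Theorem~\ref{thm03} under $t=-4x/(1+x)^2$, $t+1=\bigl(\tfrac{1-x}{1+x}\bigr)^2$, and your remark that only integer powers of $t+1$ occur correctly disposes of the square-root branch issue. Your polynomial recurrence $\xi_{n+1}(t)=(1+2n+nt)\xi_n(t)-2(t+1)(t+2)\xi_n'(t)$ agrees with the paper's \eqref{xinx-recu} (note $(1+x)(4+2x)=2(x+1)(x+2)$), and extracting the coefficient of $t^i$ does give \eqref{xi-gamma-recu}, which pins your $\xi_{n,i}$ to the ones defined in the statement; this computation is essentially the paper's recurrence-matching argument run in the opposite direction. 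The most substantial difference is that you prove the left-peak identity $x^{n/2}Q_n(1/\sqrt{x})=M_n(x+1)$ yourself, by matching $Q(x^{-1/2};\sqrt{x}\,z)=\bigl(\cos(\sqrt{x}\,z)-x^{-1/2}\sin(\sqrt{x}\,z)\bigr)^{-1}$ against $M(x+1;z)$ obtained from Gessel's formula \eqref{Gessel} via $\sqrt{1-(x+1)}=i\sqrt{x}$; this replaces the paper's external citation with a self-contained formal power series computation, and it checks out. Both arguments then extract positivity identically from $\xi_n(x-2)=M_n(x)$, your explicit formula $\xi_{n,i}=\sum_{k\geqslant i}\binom{k}{i}2^{k-i}M(n,k)$ being a pleasant bonus. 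In short, the paper's grammar route is independent of Theorem~\ref{thm03} and consonant with its overall machinery, while yours is shorter given Theorem~\ref{thm03} and is self-contained at exactly the point where the paper leans on a reference.
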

\begin{proof}
Let $G_2$ be the grammar given in Theorem~\ref{lemma02}. Setting $a=x+y$ and $b=xy$. Then we have
$D_{G_2}(e)=ea,~D_{G_2}(a)=2a^2-4b,~D_{G_2}(b)=a^3-2ab$.
Consider the grammar $$G_3=\{e\rightarrow ea,~a\rightarrow 2a^2-4b,~b\rightarrow a^3-2ab\}.$$
Note that $D_{G_3}(e)=ea,~D_{G_3}^2(e)=e(3a^2-4b)$ and $D_{G_3}^3(e)=e(11a^3-20ab)$. We proceed by induction.
Assume that
\begin{equation}\label{DG3e}
D_{G_3}^n(e)=e\sum_{i=0}^{\lrf{n/2}}\xi_{n,i}a^{n-2i}(-4b)^{i}.
\end{equation}
It follows that
\begin{align*}
&D_{G_3}^{n+1}(e)=D_{G_3}\left(e\sum_{i=0}^{\lrf{n/2}}\xi_{n,i}a^{n-2i}(-4b)^{i}\right)\\
&=e\sum_{i=0}^{\lrf{n/2}}\xi_{n,i}\left(a^{n-2i+1}(-4b)^{i}+(n-2i)a^{n-2i-1}(2a^2-4b)(-4b)^i+(-4)^ia^{n-2i}ib^{i-1}(a^3-2ab)\right).
\end{align*}
Equating the coefficients of $a^{n-2i+1}(-4b)^{i}$ in both sides of the above expansion, we get~\eqref{xi-gamma-recu}.
Then upon taking $a=1+x$ and $b=x$ in~\eqref{DG3e}, we get~\eqref{Bnx-gamma}. Multiplying both sides of~\eqref{xi-gamma-recu}
by $x^i$ and summing over all $i$, we obtain
\begin{equation}\label{xinx-recu}
\xi_{n+1}(x)=(1+2n+nx)\xi_n(x)-(1+x)(4+2x)\frac{\mathrm{d}}{\mathrm{d}x}\xi_n(x).
\end{equation}

Let $E_n(x)=\xi_n(x^2-1)$. It follows from~\eqref{xinx-recu} that
$$E_{n+1}(x)=(1+n+nx^2)E_n(x)-x(1+x^2)\frac{\mathrm{d}}{\mathrm{d}x}E_n(x).$$
Let $F_n(x)=x^nQ_n\left(\frac{1}{x}\right)$. It follows from~\eqref{Qnx-recu} that
$$F_{n+1}(x)=(1+n+nx^2)F_n(x)-x(1+x^2)\frac{\mathrm{d}}{\mathrm{d}x}F_n(x).$$
Hence $E_n(x)$ satisfies the same
recurrence and initial conditions as $F_n(x)$, so they agree.
In conclusion, we have $$\xi_n(x^2-1)=x^nQ_n\left(\frac{1}{x}\right),$$
which yields the first equality in~\eqref{xiQn}.
The second equality in~\eqref{xiQn} follows from~\cite[Theorem~2]{Ma121}.
\end{proof}

Let $\pi\in\msn$.
We say that $\pi$ is {\it alternating} if $\pi(1)>\pi(2)<\pi(3)>\cdots \pi(n)$.
In other words, $\pi(2i)<\pi({2i+1})$ and $\pi(2i+1)>\pi({2i+2})$ for $0\leqslant i\leqslant \lrf{{(n-1)}/{2}}$.
The {\it secant number} $E_{2n}$ is the number of alternating permutations in $\ms_{2n}$. It is well known that
$$\sec z=\sum_{n=0}^\infty E_{2n}\frac{z^{2n}}{(2n)!}=1+\frac{z^2}{2!}+5\frac{z^4}{4!}+61\frac{z^6}{6!}+\cdots.$$

Below are the polynomials $\xi_n(x)$ for $n\leqslant 5$:
\begin{align*}
&\xi_0(x)=1,~\xi_1(x)=1,~\xi_2(x)=3+x,~\xi_3(x)=11+5x,\\
&\xi_4(x)=57+38x+5x^2,~\xi_5(x)=361+302x+61x^2.
\end{align*}
From~\eqref{xinx-recu}, we see that $\xi_n(-1)=n!$.
Combining~\eqref{Gessel} and~\eqref{xiQn}, we get the following corollary.
\begin{corollary}
We have
\begin{equation}\label{xixz}
\xi(x;z)=\sum_{n=0}^\infty\xi_n(x)\frac{z^n}{n!}=\frac{\sqrt{1+x}}{\sqrt{1+x}\cos(z\sqrt{1+x})-\sin(z\sqrt{1+x})}.
\end{equation}
In particular, $\xi_{n,0}=s_n$ and $\xi_{2n,n}=\xi_{2n-1,n-1}=E_{2n}$, where $s_n$ and $E_{2n}$ are the Springer numbers and secant numbers, respectively.
\end{corollary}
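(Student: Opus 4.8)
The plan is to prove the final corollary, namely the generating function \eqref{xixz} together with the identifications $\xi_{n,0}=s_n$ and $\xi_{2n,n}=\xi_{2n-1,n-1}=E_{2n}$. The corollary is stated as following from \eqref{Gessel} and \eqref{xiQn}, so the whole argument is a substitution in a known generating function plus two coefficient extractions.

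For the generating function, I would start from the first equality in \eqref{xiQn}, which asserts $\xi_n(x-1)=x^{n/2}Q_n(1/\sqrt{x})$, equivalently $\xi_n(x)=(1+x)^{n/2}Q_n\bigl(1/\sqrt{1+x}\bigr)$. Summing against $z^n/n!$ gives
$$\xi(x;z)=\sum_{n=0}^\infty Q_n\!\left(\frac{1}{\sqrt{1+x}}\right)\frac{\bigl(z\sqrt{1+x}\bigr)^n}{n!}=Q\!\left(\frac{1}{\sqrt{1+x}};\,z\sqrt{1+x}\right).$$
Now I substitute into the closed form \eqref{Qxz-EGF}, $Q(u;w)=\sec w/(1-u\tan w)$, with $u=1/\sqrt{1+x}$ and $w=z\sqrt{1+x}$. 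Writing $\sqrt{1+x}=s$ for brevity, the expression becomes $\sec(sz)/\bigl(1-\tfrac1s\tan(sz)\bigr)$; clearing the denominator by multiplying numerator and denominator by $s$ and using $\sec(sz)=1/\cos(sz)$ and $\tan(sz)=\sin(sz)/\cos(sz)$ yields $s/\bigl(s\cos(sz)-\sin(sz)\bigr)$, which is precisely the right-hand side of \eqref{xixz}. This is a routine trigonometric simplification and I would present it in one or two lines.

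For the two special values I would extract coefficients. Setting $x=0$ in \eqref{xixz} gives $\xi(0;z)=1/\bigl(\cos z-\sin z\bigr)$, which is exactly the Springer-number generating function recalled in the introduction, so $\xi_{n,0}=\xi_n(0)=s_n$. For the top coefficients $\xi_{2n,n}$ and $\xi_{2n-1,n-1}$, I would use the first equality of \eqref{xiQn}, $\xi_n(x-1)=x^{n/2}Q_n(1/\sqrt{x})$: the leading coefficient in $x$ on the left is $\xi_{n,\lfloor n/2\rfloor}$, while on the right it is governed by the constant term $Q_n(0)$ of the derivative polynomial. Since $Q_n(0)$ equals the $n$th secant-number coefficient $E_{2n}$ when $n$ is even and, via the recurrence \eqref{Qnx-recu} together with parity of $Q_n$, the same even-indexed secant value appears at $n$ and $n-1$, one reads off $\xi_{2n,n}=\xi_{2n-1,n-1}=E_{2n}$. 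Alternatively, and perhaps more cleanly, I would extract these from \eqref{xixz} directly: the highest power of $x$ contributing to $\xi_{2n,n}$ and $\xi_{2n-1,n-1}$ comes from the secant part of $\sqrt{1+x}\,\bigl/\bigl(\sqrt{1+x}\cos(z\sqrt{1+x})-\sin(z\sqrt{1+x})\bigr)$ after expanding in $x$, matching the $\sec$ expansion $\sum E_{2n}z^{2n}/(2n)!$.

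The only genuine obstacle is bookkeeping the parity in the secant identification: the claim ties one secant value $E_{2n}$ to two different indices $2n$ and $2n-1$, and the cleanest justification is to track $Q_n(0)$ through the recurrence \eqref{Qnx-recu}, noting that $Q_n$ has the same parity as $n$, so $Q_{2n-1}(0)=0$ would be wrong and one must instead argue at the level of the leading $\gamma$-coefficient rather than the constant term of $Q_n$. I would therefore favor the direct extraction from \eqref{xixz}, where matching the top-degree-in-$x$ part against the secant series $\sec z=\sum E_{2n}z^{2n}/(2n)!$ makes the coincidence of the two indices transparent, since both $\xi_{2n,n}$ and $\xi_{2n-1,n-1}$ are picked out by the same coefficient of the purely even generating function $\sec$.
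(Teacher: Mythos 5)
Your proposal is correct, and at its core it is the same argument the paper intends --- express $\xi_n$ through a family with a known closed-form exponential generating function via \eqref{xiQn} and substitute --- but you key it to a different known EGF. The paper combines the left-peak form of \eqref{xiQn} (equivalently $\xi_n(x)=M_n(x+2)$) with Gessel's formula \eqref{Gessel}, which forces a hyperbolic-to-trigonometric conversion through the imaginary substitution $\sqrt{1-(x+2)}=i\sqrt{1+x}$; you instead use the first equality of \eqref{xiQn}, $\xi_n(x)=(1+x)^{n/2}Q_n\bigl(1/\sqrt{1+x}\bigr)$, together with the derivative-polynomial EGF \eqref{Qxz-EGF}, so the whole computation stays real and reduces to the one-line simplification $\sec(sz)/\bigl(1-\tfrac1s\tan(sz)\bigr)=s/\bigl(s\cos(sz)-\sin(sz)\bigr)$ with $s=\sqrt{1+x}$. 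The trade-off is in the ``in particular'' part: the paper's route through $M_n$ makes the extreme coefficients combinatorial, $\xi_{n,\lfloor n/2\rfloor}=M(n,\lfloor n/2\rfloor)$, whereas your route must track the lowest-degree coefficient of $Q_n$ --- and you correctly identified and defused the parity trap there: for odd index the constant term $Q_{2n-1}(0)$ vanishes, so $\xi_{2n-1,n-1}$ is read off from $Q_{2n-1}'(0)$, i.e., from $\partial_x Q(x;z)\vert_{x=0}=\sec z\tan z=(\sec z)'$, whose coefficient of $z^{2n-1}/(2n-1)!$ is again $E_{2n}$. Your alternative of expanding \eqref{xixz} in $u=1/\sqrt{1+x}$ (the $u^0$ part being $\sec$, the $u^1$ part $\sec\tan$) is an equivalent and fully rigorous way to make that last step precise; either version completes the proof.
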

%%%%%%%%%%%%%%%%%%%%%%%%%%%%%%%%%%%%%%%%%%%%%%%%%%%%%%%%%%%%%%%%%%%%%%%%%%%%%%%%%%
%%%%%%%%%%%%%%%%%%%%%%%%%%%%%%%%%%%%%%%%%%%%%%%%%%
\subsection*{Acknowledgements}
%%%%%%%%%%%%%%%%%%%%%%%%%%%%%%%%%%%%%%%%%%%%%%%%%
%%%%%%%%%%%%%%%%%%%%%%%%%%%%%%%%%%%%%%%%%%%%%%%%%
This work is supported by NSFC 12071063.

%%%%%%%%%%%%%%%%%%%%%%%%%%%%%%%%%%%%%%%%%%%%
%\section{Concluding remarks}\label{section5}
%%%%%%%%%%%%%%%%%%%%%%%%%%%%%%%%%%%%%%%%%%%%%%
%%%%%%%%%%%%%%%%%%%%%%%%%%%%%%%%%%%%%%%%%%%%%%%%%%%%%%%%%%%%%%%%%%%%%%%%%%%%%%%%%%%%
%%%%%%%%%%%%%%%%%%%%%%%%%%%%%%%%%%%%%%%%%%%%%%%%%%%%%%%%%%%%%%%%%%%%%%%%%%%%%%%%%%%%
%The {\it $q$-alternating Eulerian polynomials of type $B$} are defined by
%$$\widehat{B}_n(x,q)=\sum_{\sigma\in\mbn}x^{\altdes_B(\sigma)}q^{\negg(\sigma)}.$$
%In particular, $\widehat{B}_0(x,q)=1,~\widehat{B}_1(x,q)=x+q$ and $\widehat{B}_2(x,q)=(2q+q^2)+(1+q^2)x+(1+2q)x^2$.
%Let $\widehat{B}_n(x,1)=\widehat{B}_{n}(x)$ be the {\it alternating Eulerian polynomials of type $B$}.
%It should be noted that $\widehat{B}_n(x,0)=x\widehat{A}_{n}(x)$ for $n\geqslant 1$.

%%%%%%%%%%%%%%%%%%%%%%%%%%%%%%%%%%%%%%%%%%%%%%%%%%%%%%%%%%%%%%%%%%%%%%%%%%%%%%%%%
%%%%%%%%%%%%%%%%%%%%%%%%%%%%%%%%%%%%%%%%%%%%%%%%%%%%%%%%%%%%%%%%%%%%%%%%%%%%%%%%%
%%%%%%%%%%%%%%%%%%%%%%%%%%%%%%%%%%%%%%%%%%%%%%%%%

 %%%%%%%%%%%%%%%%%%%%%%%%%%%%%%%%%%%%%%%%%%%%%%%%%%%%%%%%%%%%%%%%%%%%%%%%%%


\begin{thebibliography}{14}

\bibitem{Arnold}
V.I. Arnold, \textit{The calculus of snakes and the combinatorics of Bernoulli, Euler and Springer numbers of Coxeter groups}, Russian Math. Surveys, \textbf{47} (1) (1992), 1--51.

%\bibitem{Bona12}
%M. B\'ona, Combinatorics of Permutations, second edition, CRC Press, Boca Raton, FL, 2012.

\bibitem{Branden08}
P. Br\"and\'{e}n, \textit{Actions on permutations and unimodality of descent polynomials}, European J. Combin., \textbf{29} (2008), 514--531.

\bibitem{Brenti94}
F. Brenti, \textit{$q$-Eulerian polynomials arising from Coxeter groups}, European J. Combin.,
\textbf{15} (1994), 417--441.

\bibitem{Chebikin08}
D. Chebikin, \textit{Variations on descents and inversions in permutations}, Electron. J.
Combin., \textbf{15} (2008), \#R132.

\bibitem{Chen93}
W.Y.C. Chen, \textit{Context-free grammars, differential operators and formal
power series}, Theoret. Comput. Sci., \textbf{117} (1993), 113--129.

\bibitem{Chen17}
W.Y.C. Chen, A.M. Fu, \textit{Context-free grammars for permutations and increasing trees}, Adv. in Appl. Math., \textbf{82} (2017), 58--82.

\bibitem{Cvijovic09}
D. Cvijovi\'c, \textit{Derivative polynomials and closed-form higher derivative formulae},
Appl. Math. Comput., \textbf{215} (2009), 3002--3006.

\bibitem{Chow08}
C.-O. Chow, \textit{On certain combinatorial expansions of the Eulerian polynomials}, Adv. in Appl. Math., \textbf{41} (2008), 133--157.


\bibitem{Chow11}
C-O. Chow, W. C. Shiu, \textit{Counting simsun permutations by descents}, Ann. Comb., \textbf{15} (2011), 625--635.

\bibitem{Dumont96}
D. Dumont, \textit{Grammaires de William Chen et d\'erivations dans les arbres et
arborescences}, S\'em. Lothar. Combin., \textbf{37}, Art. B37a (1996), 1--21.


\bibitem{Franssens07}
G.R. Franssens, \textit{Functions with derivatives given by polynomials in the function itself or a related function}, Anal. Math., \textbf{33} (2007), 17--36.

\bibitem{Gessel14}
I.M. Gessel, Y. Zhuang, \textit{Counting Permutations by alternating descents}, Electron. J.
Combin., \textbf{21}(4) (2014), \#P4.23.

\bibitem{Hetyei08}
G. Hetyei, \textit{Tchebyshev triangulations of stable simplicial complexes},
J. Combin. Theory Ser. A, \textbf{115} (2008), 569--592.


\bibitem{Hoffman95}
M.E. Hoffman, \textit{Derivative polynomials for tangent and secant}, Amer. Math. Monthly, \textbf{102} (1995), 23--30.


\bibitem{Josuat14}
M. Josuat-Verg\`{e}s,  \textit{Enumeration of snakes and cycle-alternating permutations}, Australas. J.
Combin., \textbf{60} (2014), 279--305.
%\bibitem{Kur73}
%K. Kur\c{s}ung\"{o}z, A.J. Yee, \textit{Alternating permutations and the $m$th descents, Discrete Math. 311 (2011) 2610--2622.

\bibitem{Lin2011}
Z. Lin, S.-M. Ma, D.G.L. Wang, L. Wang, \textit{Positivity and divisibility of alternating descent polynomials}, \arxiv{arXiv:2011.02685}.

\bibitem{Ma121}
S.-M. Ma, \textit{Derivative polynomials and enumeration of permutations by number of interior and left peaks}, Discrete Math., \textbf{312} (2012), 405--412.


\bibitem{Ma131}
S.-M. Ma, \textit{Some combinatorial arrays generated by context-free grammars}, European J. Combin.,
\textbf{34} (2013), 1081--1091.
%\bibitem{Ma122}
%S.-M. Ma, An explicit formula for the number of permutations with a given number of alternating runs, J. Combin. Theory Ser. A 119 (2012) 1660--1664.

\bibitem{Ma16}
S.-M. Ma, Y.-N. Yeh, \textit{Enumeration of permutations by number of
alternating descents}, Discrete Math., \textbf{339} (2016), 1362--1367.

\bibitem{Ma1601}
S.-M. Ma, Y.-N. Yeh, \textit{The peak statistics on simsun permutations}, Electron. J. Combin., \textbf{23}(2) (2016), \#P2.14.


\bibitem{Petersen07}
T.K. Petersen, \textit{Enriched P-partitions and peak algebras}, Adv. Math., \textbf{209} (2007), 561--610.

\bibitem{Petersen15}
T.K. Petersen, \textit{Eulerian Numbers}. Birkh\"auser/Springer, New York, 2015.


\bibitem{Remmel12}
J.B. Remmel, \textit{Generating functions for alternating descents and
alternating major index}, Ann. Comb., \textbf{16} (2012), 625--650.

\bibitem{Zeng16}
H. Shin and J. Zeng, \textit{Symmetric unimodal expansions of excedances in colored permutations}, European
J. Combin., \textbf{52} (2016), 174--196.

\bibitem{Sloane}
N.J.A. Sloane, The On-Line Encyclopedia of Integer Sequences,
published electronically at
http://oeis.org, 2010.

\bibitem{Springer71}
T.A. Springer, \textit{Remarks on a combinatorial problem}, Nieuw Arch. Wisk., \textbf{19} (1971), 30--36.
%\bibitem{Sundaram1994}
%S. Sundaram, \newblock The homology representations of the symmetric group on Cohen-Macaulay subposets of the partition lattice, \newblock {\em Adv. Math.}, 104(2) (1994), 225--296.
%\bibitem{Stembridge97}
%J. Stembridge, \textit{Enriched $P$-partitions}, Trans. Amer. Math. Soc. \textbf{349}(2): (1997) 763--788.
\end{thebibliography}
\end{document}